\theoremstyle{plain}
\newtheorem{definition}{Definition}
\newtheorem{proposition}{Proposition}
\newtheorem{lemma}{Lemma}
\newtheorem{corollary}{Corollary}
\theoremstyle{definition}
\newtheorem{example}{Example}
\theoremstyle{remark}
\DeclareMathOperator{\id}{id}
\DeclareMathOperator{\argmin}{argmin}
\def\shf{\mathcal}
\title{Aggregation sheaves for greedy modal decompositions}
\author{Michael Robinson}\thanks{SRC, Inc., and Department of Mathematics and Statistics, American University, Washington, DC 20016 (mrobinson@srcinc.com)}
\begin{document}

\begin{abstract}
This article develops a new theoretical basis for decomposing signals that are formed by the linear superposition of a finite number of modes.
Each mode depends nonlinearly upon several parameters; we seek both these parameters and the weights within the superposition.
The particular focus of this article is upon solving the decomposition problem when the number of modes is not known in advance.
A sheaf-theoretic formalism is introduced that describes all modal decomposition problems,
and it is shown that minimizing the local consistency radius within this sheaf is guaranteed to solve them.
Since the modes may or may not be well-separated, a greedy algorithm that identifies the most distinct modes first may not work reliably.
This article introduces a novel mathematical formalism, aggregation sheaves, and shows how they characterize the behavior of greedy
algorithms that attempt to solve modal decomposition problems.
\end{abstract}

\maketitle



\section{Introduction}
\label{sec-intro}

To explore the fundamental limits on modal imaging, we need an umbrella framework that encompasses all existing ones yet generalizes beyond them to the maximum extent possible. It has been recently argued by the author that sheaves are the appropriate mathematical foundation upon which to build a theory of topological filtering \cite{robinson2014topological}. 
Because sheaves and topological filters are foundational, 
they can be used to subsume all modal decomposition approaches as special cases, 
including those not yet discovered, 
and all algorithmic instantiations of them.
 
Recently, we demonstrated that truly revolutionary signal-to-noise performance can be obtained when compared to existing approaches if theoretical and implementation hurdles can be surmounted \cite{robinson2018dynamic}. 
This article attempts to reconcile these empirical findings with an appropriately general theory.
By starting generally, it attempts to lay the theoretical and methodological groundwork for solving \emph{all} modal decomposition problems.
One cannot overemphasize this fact: regardless of what physical processes or techniques are used for gathering measurements about sources -- be they classical, quantum, or otherwise -- they are subject to the same mathematical formalism as described in this article.
Given the generality of our approach, it is likely that (a) our methods will apply in many unexpected settings, (b) our methods may exactly recover existing techniques in certain settings, but (c) anecdotally (see \cite{robinson2018dynamic} for instance) our more general methods may substantially outperform existing modal decompositions when existing techniques are hampered by limiting assumptions about the signals being used.

\subsection{Historical context}
\label{sec-history}

Modal decompositions are old problems, so there are many frameworks in use today, such as Fourier bases, wavelets, and their generalizations to frames.  
Generalized Fourier decompositions, relying on the algebraic structure of the modes.  For instance, wavelets \cite{daubechies1992ten,rioul1991wavelets} rely on frames \cite{casazza2013introduction,casazza2000art}, which are overcomplete orthogonal dictionaries of modes.

Compressive sensing methods have become popular for solving decomposition problems (for instance, \cite{fornasier2015compressive,baraniuk2007compressive}).
They generally rely on the sparsity of the modes, which is to say that a given signal is composed of a few modes only. 
As a result, compressive sensing generally exploits both algebraic and geometric structure to reduce the necessary search space for a modal decomposition. 

Many of these decomposition techniques rely upon a greedy or partially-greedy algorithmic approach in practice.
Perhaps the most famous greedy algorithm for source decompositions in the CLEAN algorthim and its generalizations (see \cite{Hogbom_1974,mckinnon1990spectral,Stewart_2011} for instance, among many others).
These greedy algorithms rely on the geometric structure of modes, 
such as a single strongest peak within each mode. 
Although greedy algorithms can perform extremely well in certain settings, 
they can also fail spectacularly when their assumptions are violated.
This article begins to a provide a theoretical basis for ensuring the correct performance of these greedy algorithms.

Finally, recent quantum techniques have raised awareness of the possibility for substantially better performance.  
For instance, Tsang \cite{tsang2019resolving} showed that the quantum modal imaging could separate sources that were separated well below the Rayleigh limit.
This idea has already led to improvements in molecular microscopy \cite{mazidi2020quantifying}, 
in which the Wasserstein metric was used to provide statistical robustness.


\section{Detailed problem statement}
\label{sec-problem_statement}

Assume that one obtains a number of signals of the form
\begin{equation}
\label{eq:general_signal_model}
s(x;a_1,b_1, a_2, b_2, \dotsc, a_N, b_N) = \sum_{n=1}^N a_n \phi(x;b_n),
\end{equation}
where usually the \emph{number of sources} $N$ is fixed but unknown.
We will call $a_n$ and $b_n$ the \emph{parameters for source $n$}.
Our goal is to find $\{a_n\}$ and $\{b_n\}$ from samples of $s$ at a finite set of values for $x$, namely $x_1$, $x_2$, $\dotsc$, $x_M$.

We will call each $x_m$ a \emph{measurement} and $s(x_m)$ an \emph{observation}.

Equation \ref{eq:general_signal_model} is meant to be taken fairly generally.  
We do not require that $\phi$ be nonnegative, complex valued, or real valued.
(For simplicity of exposition \emph{only}, we will assume that $\phi$ is real valued.
The reader may generalize this \emph{mutatis mutandis} if desired.) 
That said, we will assume that $\phi(x,y)$ is smooth in the second variable $y$,
but we need not assume that it is linear in either variable.  
If $\phi(x,y)$ is not continuous in the first variable $x$, 
this may limit certain strategies for adaptively determining the next measurement.
However, even if $\phi(x,y)$ is not continuous in the first variable $x$,
all of the theory and some of the methodology will still work without changes.

What follows are two separate source parameter recovery problems that are special cases of our assumed signal model.
When we solve the problem of recovering source parameters in the general case in Section \ref{sec-approach}, 
our solution \emph{automatically applies} to each of the special cases described in this section.  
Although there are algorithmic considerations, 
the theory provides a surprisingly complete set of solvability conditions (Proposition \ref{prop:whitney}) and prescribes a rather definite algorithmic recipe.
Quite specifically, the algorithms that solve each of the problems in this section amount to special cases of solvers for the general optimization problems posited in the statements of Corollary \ref{cor:known_source_optimization} and Proposition \ref{prop:unknown_source_optimization}.
These optimization problems are \emph{consistency radius minimization problems} whose use has been recently effective in solving many complex signal processing problems \cite{robinson2019hunting, robinson2018dynamic}.

\subsection{Fourier decomposition}

In the most basic case, Equation \ref{eq:general_signal_model} represents the Fourier series decomposition of a periodic function.
For this representation, we merely need to take $b_n = n \in \mathbb{Z}$ and
\begin{equation*}
\phi(x;n) = e^{2\pi i n x},
\end{equation*}
where $n \in \mathbb{Z}$, which is to say that $n$ is an arbitrary integer.

The signal model is then simply
\begin{equation*}
s(x) = \sum_{n=-N}^N a_n e^{2\pi i n x}.
\end{equation*}
Since we are only concerned with finitely many sources, we will not consider what happens if $N\to \infty$.

\subsection{Spectral Estimation}
\label{sec-spectral_estimation}

We can generalize the Fourier series decomposition to handle non-integer frequencies by a slight change to the signal model,
\begin{equation*}
\phi(x;\omega) = e^{2\pi i \omega x},
\end{equation*}
where $\omega \in \mathbb{R}$.  
The signal model is
\begin{equation*}
s(x) = \sum_{n=1}^N a_n e^{2\pi i \omega_n x},
\end{equation*}
so that the complex magnitudes $\{a_n\}$ and the real frequencies $\{\omega_n\}$ are parameters.  
Of course this signal model really isn't much different from the Fourier decomposition, 
but it differs in one important aspect: \emph{convergence}.  
With finitely many sources (ie. $N < \infty$), 
this decomposition is a finite sum even though its Fourier decomposition may require infinitely many terms.  
Worse, the Fourier series usually only converges in the $L^2$ sense (not pointwise) and may do so slowly.


\section{Detailed technical approach}
\label{sec-approach}

This section details our technical approach in detail. 
The story is rather intricate,
as the demands of the problem couple with the demands of adaptive algorithms.
Right from the start, the general formulation of the problem in Section \ref{sec-problem_statement}
beckons for one to discover its proper mathematical foundations.

Consider that each source is determined by its magnitude and location.
These properties have different units and thereby suggest that the correct \emph{topological space} for sources is not Euclidean space.
As explained in Section \ref{sec-source_parameters}, we have discovered that the correct space for sources is a quotient of a product of \emph{conical} spaces.
Interpreting a set of source parameters as a point within such a quotient space, 
we can connect measurements of the sources (how ever they are obtained) to the coordinates of this point.
Differential topology provides a clear, concrete answer as to how many measurements must be obtained in order to succeed at this task,
which we outline in Section \ref{sec-free_parameters}.

While a non-adaptive approach for source decomposition has a straightforward incarnation as a sheaf we call $\shf{J}_P$ (as we describe in Sections \ref{sec-sheaf_known} and \ref{sec-sheaf_unknown}),
adaptive approaches require a novel mathematical tool called an \emph{aggregation sheaf}.
Since aggregation sheaves are a completely new discovery, we explore their properties in Section \ref{sec-aggregation}.
Finally, we apply aggregation sheaves to support an adaptive approach to general source decomposition in Section \ref{sec-sheaf_nested}.
The result is a sheaf we call $\shf{K}_P$.

\subsection{Source parameter space representation}
\label{sec-source_parameters}

The problem at hand is determining the parameters of the sources, namely the values of $a_n$ and $b_n$ in Equation \eqref{eq:general_signal_model} 
from the values of $s(x_1)$, $s(x_2)$, $\dotsc$, $s(x_M)$.  

To fix ideas, let us first consider each source individually.
It happens that this is already an interesting -- and apparently underappreciated -- problem.
Assume that $a_n \in \mathbb{R}^A$ and $b_n \in \mathbb{R}^B$ for each $n$, for some fixed $A$, $B$.
Conventionally, we call $a_n$ the \emph{magnitude} of the source and $b_n$ its \emph{location}.

It is convenient to encapsulate the source parameters into a single manifold $I$,
whose points consist of pairs $w=(a,b) \in \mathbb{R}^A\times \mathbb{R}^B \cong \mathbb{R}^{A+B}$
under the identification that $(0,b) = (0,b')$ for any $b$ and $b'$.
The identification captures the intuition that if a source has no magnitude, the other parameters do not matter.
We therefore call any element $(0,b)$ to be a \emph{zero element} of $I$.

The manifold $I$ should have a metric that reflects the zero elements.
We can enforce this requirement if we assume that $a_i \in \mathbb{R}^A$ and $b_i \in \mathbb{R}^B$
\begin{equation}
\label{eq:I_metric}
d_I((a_1,b_1),(a_2,b_2)) := \min\left\{\|a_1\|+\|a_2\|,\sqrt{\|a_1-a_2\|^2 + \beta \|b_1 - b_2\|^2}\right\},
\end{equation}
where $\beta>0$ is a constant and the usual norms on $\mathbb{R}^A$ and $\mathbb{R}^B$ are assumed.
This metric is called a \emph{bottleneck metric}, and is widely used in applications of topology.
Notice in particular that this reduces to the case of $d_I((0,b),(0,b'))=0$ and $d_I((a,b),(a',b)) = \|a-a'\|$.

\begin{lemma}
The bottleneck metric is a pseudometric on $\mathbb{R}^{A+B}$, and is a metric on $I$.
\end{lemma}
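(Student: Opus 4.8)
The plan is to verify the pseudometric axioms for $d_I$ on $\mathbb{R}^{A+B}$ directly, and then identify the degenerate pairs to conclude that the induced map on the quotient $I$ is a genuine metric. Write $d_1((a_1,b_1),(a_2,b_2)) = \|a_1\|+\|a_2\|$ and $d_2((a_1,b_1),(a_2,b_2)) = \sqrt{\|a_1-a_2\|^2 + \beta\|b_1-b_2\|^2}$, so that $d_I = \min\{d_1,d_2\}$. Nonnegativity and symmetry are immediate from the corresponding properties of $d_1$ and $d_2$ (both are symmetric and nonnegative). For $d_I((a,b),(a,b)) = 0$, note that $d_2$ vanishes there. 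So the only real content is the triangle inequality, plus the characterization of when $d_I$ vanishes.

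For the triangle inequality, fix three points $w_i=(a_i,b_i)$, $i=1,2,3$, and argue by cases on which branch achieves the minimum in $d_I(w_1,w_3)$. If $d_I(w_1,w_3)=d_2(w_1,w_3)$, then since $d_2$ is itself a metric (it is the metric attached to the positive-definite quadratic form $\|a\|^2+\beta\|b\|^2$ on $\mathbb{R}^{A+B}$), we have $d_2(w_1,w_3)\le d_2(w_1,w_2)+d_2(w_2,w_3)$, and each summand dominates the corresponding $d_I$, giving $d_I(w_1,w_3)\le d_I(w_1,w_2)+d_I(w_2,w_3)$. If instead $d_I(w_1,w_3)=d_1(w_1,w_3)=\|a_1\|+\|a_3\|$, I would split again according to which branches are active for the two legs. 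The key elementary inequality is that $d_1$ alone satisfies a triangle-type bound through any middle point because $\|a_1\|+\|a_3\| \le (\|a_1\|+\|a_2\|) + (\|a_2\|+\|a_3\|)$ trivially (the middle terms only help), so $d_1(w_1,w_3)\le d_1(w_1,w_2)+d_1(w_2,w_3)$; and in the mixed case one uses $\|a_1\|\le \|a_1-a_2\|+\|a_2\|$ together with $\|a_2\|\le\sqrt{\|a_2\|^2+\beta\|b_2-b_3\|^2}$-type bounds to pass between the branches. The cleanest way to organize all of this is the observation that $d_1(w_1,w_3)\le d_2(w_1,w_2)+d_1(w_2,w_3)$ and $d_1(w_1,w_3)\le d_1(w_1,w_2)+d_2(w_2,w_3)$: the first follows from $\|a_1\| \le \|a_1-a_2\| + \|a_2\| \le d_2(w_1,w_2) + \|a_2\|$ and $\|a_3\|$ being part of $d_1(w_2,w_3)$... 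I would in fact prove the single clean statement $d_I(w_1,w_3) \le e(w_1,w_2) + e(w_2,w_3)$ for every choice of $e\in\{d_1,d_2\}$ on each leg, which covers all four cases and immediately yields $d_I(w_1,w_3)\le d_I(w_1,w_2)+d_I(w_2,w_3)$ upon taking the active branches. I expect assembling these mixed-branch estimates to be the main obstacle — not because any one step is hard, but because bookkeeping the cases cleanly takes care.

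Finally, to pin down the degenerate pairs: $d_I(w_1,w_2)=0$ iff $d_1=0$ or $d_2=0$, i.e. iff ($\|a_1\|=\|a_2\|=0$) or ($a_1=a_2$ and $b_1=b_2$). The first alternative says $w_1=(0,b_1)$ and $w_2=(0,b_2)$ for arbitrary $b_1,b_2$; the second says $w_1=w_2$. Thus $d_I$ fails to be a metric on $\mathbb{R}^{A+B}$ precisely on the zero elements, and these are exactly the points identified in the definition of $I$. Hence $d_I$ descends to a well-defined function on $I\times I$ — one checks it respects the identification since $d_I((0,b),w)=d_I((0,b'),w)$ follows from $\|{\cdot}\|$ appearing only through $\|0\|=0$ in the relevant branch — and on $I$ the relation $d_I(w_1,w_2)=0$ now forces $w_1=w_2$, so $d_I$ is a metric on $I$. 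The two stated special cases $d_I((0,b),(0,b'))=0$ and $d_I((a,b),(a',b))=\|a-a'\|$ fall out by direct substitution: in the first, $d_1=0$; in the second, $d_2=\|a-a'\|\le\|a\|+\|a'\|=d_1$ by the triangle inequality in $\mathbb{R}^A$.
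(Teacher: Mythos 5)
Your proposal is correct and follows the same overall outline as the paper's proof (verify the pseudometric axioms directly, then characterize the zero-distance pairs to get a metric on the quotient), but your handling of the triangle inequality is genuinely different, and it is the more robust of the two. With your notation $d_1(w_1,w_2)=\|a_1\|+\|a_2\|$ and $d_2$ the weighted Euclidean branch, the paper bounds $d_I(w_1,w_3)$ by $\min\left\{d_1(w_1,w_2)+d_1(w_2,w_3),\ d_2(w_1,w_2)+d_2(w_2,w_3)\right\}$ and then asserts that this is at most $\min\{d_1,d_2\}(w_1,w_2)+\min\{d_1,d_2\}(w_2,w_3)$; that last step implicitly relies on $\min\{x_1+x_2,\,y_1+y_2\}\le\min\{x_1,y_1\}+\min\{x_2,y_2\}$, which is false in general (take $x_1=y_2=0$, $x_2=y_1=1$) and does fail here precisely when the two legs select different branches. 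Your plan --- prove $d_I(w_1,w_3)\le e(w_1,w_2)+e'(w_2,w_3)$ for each of the four choices $e,e'\in\{d_1,d_2\}$ and then take the active branch on each leg --- is exactly what closes this, and the mixed cases do go through via the $1$-Lipschitz estimate $\|a_1\|\le\|a_1-a_2\|+\|a_2\|\le d_2(w_1,w_2)+\|a_2\|$ that you name (so $d_1(w_1,w_3)=\|a_1\|+\|a_3\|\le d_2(w_1,w_2)+d_1(w_2,w_3)$, and symmetrically). In a final write-up you should display those two mixed cases explicitly rather than deferring them to ``bookkeeping,'' since they are the only place where the two branches interact and the only step with real content; but no idea is missing. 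Your explicit verification that $d_I$ descends to the quotient (independence of the representative $b$ in a zero element) is also a detail the paper leaves implicit.
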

\begin{proof}
We simply need to establish the usual properties of a pseudometric:
\begin{description}
\item[Reflexivity] $d_I((a,b),(a,b)) = \min\left\{\|a\|+\|a\|,\sqrt{\|a-a\|^2 + \beta \|b - b\|^2}\right\} = \min\{2\|a\|,0\}=0$.  
\item[Symmetry] $d_I((a_1,b_1),(a_2,b_2)) = d_I((a_2,b_2),(a_1,b_1))$ because the defining Equation \ref{eq:I_metric} is clearly symmetric.
\item[Triangle inequality] The usual trick of adding and subtracting, along with the usual triangle inequality, establishes that
\begin{equation*}
\begin{aligned}
\sqrt{\|a_1-a_3\|^2 + \beta \|b_1 - b_3\|^2} & =  \sqrt{\|a_1-a_3 + a_2 - a_2\|^2 + \beta \|b_1 - b_3 + b_2 - b_2\|^2}\\
& \le  \sqrt{\|a_1-a_2\|^2 + \beta \|b_1 - b_2\|^2 } +\sqrt{\| a_2 - a_3\|^2 +\beta\| b_2 - b_3\|^2}\\
\end{aligned}
\end{equation*} 
and it is clear that $\|a_1\|+\|a_3\| \le \|a_1\| + 2\|a_2\| + \|a_3\|$.  
Combining these facts,
\begin{equation*}
\begin{aligned}
d_I((a_1,b_1),(a_3,b_3))  =& \min\left\{\|a_1\|+\|a_3\|,\sqrt{\|a_1-a_3\|^2 + \beta \|b_1 - b_3\|^2}\right\}\\
\le& \min\left\{\|a_1\|+2\|a_2\|+\|a_3\|,\sqrt{\|a_1-a_2\|^2 + \beta \|b_1 - b_2\|^2 } \right.\\&\left.+\sqrt{\| a_2 - a_3\|^2 +\beta\| b_2 - b_3\|^2}\right\}\\
\le& \min\left\{\|a_1\|+\|a_2\|,\sqrt{\|a_1-a_2\|^2 + \beta \|b_1 - b_2\|^2 } \right\} \\&+ 
\min\left\{\|a_2\|+\|a_3\|,\sqrt{\| a_2 - a_3\|^2 +\beta\| b_2 - b_3\|^2}\right\}\\
\le& d_I((a_1,b_1),(a_2,b_2))  + d_I((a_2,b_2),(a_3,b_3)). 
\end{aligned}
\end{equation*}
\end{description}
Finally, if $d_I((a_1,b_1),(a_2,b_2)) = 0$, this means that either $\|a_1\|+\|a_2\|=0$ or $\|a_1-a_2\|^2 + \beta \|b_1 - b_2\|^2=0$.
In the latter case, it is clear that $a_1=a_2$ and $b_1=b_2$, since $\beta>0$ and the usual norms induce metrics on Euclidean space.
In the former case, we conclude that $a_1=a_2=0$, which means that both points correspond to the vertex of $I$.
These two cases therefore establish that $d_I$ is a metric on $I$.
\end{proof}

\begin{definition}
\label{def:conical_space}
A \emph{conical space} is a metric space $I$ that can be written as the quotient of a product $\mathbb{R}^A \times \mathbb{R}^B$ with a metric $d_I$ of the form given in Equation \ref{eq:I_metric}, where the quotient identifies sets of points that are distance zero apart from each other.
\end{definition}

As a shorthand, we will use the notation
\begin{equation*}
\|(a,b)\|_I := d_I((0,c),(a,b)) =\min\left\{\|a\|,\sqrt{\|a\|^2 + \beta \|c - b\|^2}\right\} = \|a\|,
\end{equation*}
which works a bit like a norm on $I$ because it recovers the norm on $\mathbb{R}^A$.
We will call $\|(a,b)\|_I$ the \emph{magnitude norm} of $(a,b)$ in $I$.

\begin{lemma}
The metric $d_I$ on $I$ satisfies the following properties:
\begin{enumerate}
\item If $s$ is a real number, $\|(sa,sb)\|_I = |s| \|(a,b)\|_I$,
\item $\|(a,b)\|_I=0$ if and only if $a=0$, and
\item $\|(a_1+a_2,b)\|_I \le \|(a_1,b)\|_I + \|(a_2,b)\|_I$ for all $a_1$, $a_2$, and $b$.
\end{enumerate}
\end{lemma}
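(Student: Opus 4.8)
The plan is to reduce all three assertions to the single identity $\|(a,b)\|_I = \|a\|$, which was already recorded in the computation defining the magnitude norm (there $\sqrt{\|a\|^2 + \beta\|c-b\|^2} \ge \|a\|$ forces the minimum to equal $\|a\|$), and then to read each property off from the corresponding property of the ambient Euclidean norm $\|\cdot\|$ on $\mathbb{R}^A$. Before doing that I would pause to confirm that $\|(a,b)\|_I$ is well defined as a function on the quotient $I$ and not merely on $\mathbb{R}^A \times \mathbb{R}^B$: the only points identified in forming $I$ are pairs $(0,b)$ and $(0,b')$, and for every such pair $\|(0,b)\|_I = \|0\| = 0$ regardless of $b$, so the value depends only on the $I$-class of the argument. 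This is the one genuinely structural point; everything after it is bookkeeping.

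With that in hand, for (1) I would expand the shorthand to get $\|(sa,sb)\|_I = \|sa\| = |s|\,\|a\| = |s|\,\|(a,b)\|_I$, where the middle step is absolute homogeneity of the norm on $\mathbb{R}^A$; the degenerate case $s=0$ is consistent with the identification above, both sides then being $0$. For (2), $\|(a,b)\|_I = \|a\|$ and $\|a\|=0$ if and only if $a=0$ by positive-definiteness of the Euclidean norm — equivalently, the class of any zero element is the unique point of $I$ with magnitude norm zero. For (3), $\|(a_1+a_2,b)\|_I = \|a_1+a_2\| \le \|a_1\| + \|a_2\| = \|(a_1,b)\|_I + \|(a_2,b)\|_I$, which is simply the triangle inequality in $\mathbb{R}^A$ applied to $a_1$ and $a_2$, the fixed second coordinate $b$ playing no role.

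The only step that could be called an obstacle is the compatibility-with-the-quotient observation in the first paragraph, and even that is immediate once one notes that every zero element has magnitude norm $0$. I would therefore expect the finished proof to consist of that single remark followed by three one-line computations.
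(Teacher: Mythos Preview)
Your proposal is correct and matches the paper's approach exactly: the paper's proof is the single sentence ``Each of these statements are immediate from the definition of $\|(a,b)\|_I$,'' and you have simply unpacked that immediacy via the identity $\|(a,b)\|_I = \|a\|$. Your additional remark about well-definedness on the quotient is a reasonable bit of hygiene that the paper omits.
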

\begin{proof}
Each of these statements are immediate from the definition of $\|(a,b)\|_I$.
\end{proof}

\begin{corollary}
\label{cor:normed_is_conical}
A normed space $V$ is a conical space of the form $V=V\times\{0\}$, namely the product of $V$ with the trivial vector space.
\end{corollary}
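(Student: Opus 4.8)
The plan is to exhibit $V$ as a conical space in the sense of Definition \ref{def:conical_space} by taking the location factor to be trivial. Concretely, I would set $A=\dim V$, fix a linear identification $V\cong\mathbb{R}^A$ under which the norm of $V$ becomes the norm appearing in Equation \ref{eq:I_metric}, and take $B=0$, so that $\mathbb{R}^B=\{0\}$ is the trivial vector space and $\mathbb{R}^A\times\mathbb{R}^B$ is just $V$ again, every point having the form $(a,0)$.

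Next I would write out the bottleneck metric of Equation \ref{eq:I_metric} on this product. Since the second coordinate is forced to be $0$, for any $a_1,a_2\in V$ we get
\[
d_I\bigl((a_1,0),(a_2,0)\bigr)=\min\left\{\|a_1\|+\|a_2\|,\sqrt{\|a_1-a_2\|^2+\beta\|0-0\|^2}\right\}=\min\bigl\{\|a_1\|+\|a_2\|,\ \|a_1-a_2\|\bigr\}.
\]
Then I would invoke the triangle inequality for the norm on $V$, namely $\|a_1-a_2\|\le\|a_1\|+\|a_2\|$, so that the minimum is always attained by the second term and $d_I\bigl((a_1,0),(a_2,0)\bigr)=\|a_1-a_2\|$. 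Hence $d_I$ is precisely the metric induced by the norm of $V$, and the magnitude norm satisfies $\|(a,0)\|_I=\|a\|$, recovering the original norm.

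Finally, the quotient step in Definition \ref{def:conical_space} identifies points lying at distance zero; but since $d_I$ already coincides with the norm metric, which is a genuine metric, the only pair at distance zero from $(a,0)$ is $(a,0)$ itself, so the quotient is the identity map and $V=V\times\{0\}$ carries exactly the conical-space structure. There is essentially no obstacle here: the only point requiring a little care is the compatibility of the identification $V\cong\mathbb{R}^A$ with the ``usual norm'' convention of Equation \ref{eq:I_metric} (which one reads as ``the norm of the space under consideration''), and for infinite-dimensional $V$ one interprets $\mathbb{R}^A$ as the appropriate index set; modulo these bookkeeping remarks the statement follows immediately from the triangle inequality.
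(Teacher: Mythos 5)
Your proof is correct and is exactly the verification the paper leaves implicit: the paper states this corollary without proof, treating it as immediate from Definition \ref{def:conical_space} once one observes that with $B=0$ the bottleneck metric collapses, via the triangle inequality $\|a_1-a_2\|\le\|a_1\|+\|a_2\|$, to the norm metric on $V$, so the quotient identifies nothing. Your side remarks on the identification $V\cong\mathbb{R}^A$ and the ``usual norm'' convention are reasonable bookkeeping and do not change the argument.
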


\subsection{Free parameter analysis}
\label{sec-free_parameters}

Let us start with the situation where the number of sources $N$ is known,
before we turn to the more general setting in Section \ref{sec-sheaf_unknown}.

Let us assume that measurements are given by a function $s$ taking values $s(x_m) \in \mathbb{R}$. 
(These assumptions are not terribly critical in what follows, but it makes explaining the dimension counting easier.
The reader can easily adjust the dimensions if desired.)

It is most useful to bundle all of the measurements into a vector $(s(x_1), s(x_2), \dotsc, s(x_M)) \in \mathbb{R}^M$.  
Under the usual topology on $\mathbb{R}^M$, this means that
\begin{equation*}
s(x) = \sum_{n=1}^N a_n \phi(x;b_n)
\end{equation*}
induces a smooth function $S: I^N = \mathbb{R}^{N(A+B)} \to \mathbb{R}^M$ from the $N$-fold product of the conical space $I$, given by
\begin{equation}
\label{eq:S_function}
S(a_1,b_1, a_2, b_2, \dotsc, a_N,b_N) := (s(x_1), s(x_2), \dotsc, s(x_M)).
\end{equation}  

Notice that Equation \ref{eq:S_function} is invariant to permutations of the sources.  
That is,
\begin{equation*}
S(a_1,b_1, \dotsc, a_i, b_i, \dotsc, a_j, b_j \dotsc, a_N,b_N) = S(a_1,b_1, \dotsc, a_j, b_j, \dotsc, a_i, b_i \dotsc, a_N,b_N)
\end{equation*}
for all pairs of integers $i,j = 1, \dotsc, N$.  
Put somewhat more abstractly, if $S_N$ is \emph{$N$-fold symmetric group}, the group of permutations of $N$ items,
then $S$ factors through $I^N/S_N$.
This statement is equivalent to the commutative diagram
\begin{equation*}
\xymatrix{
I^N \ar[r]^S \ar[d] & \mathbb{R}^M\\
I^N/S_N \ar[ur]_{S'}
}
\end{equation*}
In the rest of this article, we will abuse notation by allowing $S$ to refer either to $S:I^N \to \mathbb{R}^M$ or $S': I^N/S_N \to \mathbb{R}^M$
in the diagram above.
Since both $I^N$ and $I^N/S_N$ are manifolds of the same dimension, 
and both maps are defined by the same formula (Equation \ref{eq:S_function}),
this should cause little confusion.  

The problem of determining the parameters of the sources can be solved in principle if and only if $S$ is injective.  
Clearly a necessary condition for this to occur is that $M \ge N(A+B)$.

Under fairly general conditions, $S$ is usually injective if $M$ is large enough.

\begin{proposition} (the Whitney embedding theorem \cite{lee2013smooth}; see also \cite{robinson2012topological})
\label{prop:whitney}
Suppose that $\epsilon>0$ is given.
If $M > 2N(A+B)$, then there is an injective smooth function $I^N\to \mathbb{R}^M$ from source parameters to measurements that is within a distance of $\epsilon$ to $S$ in the $C^\infty$ topology of functions on $I^N$.
\end{proposition}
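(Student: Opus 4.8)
The plan is to recognize the statement as the \emph{approximation form} of the Whitney embedding theorem and to verify its hypotheses in the present (mildly singular) setting. Set $n := N(A+B)$; since $M$ is an integer, the hypothesis $M > 2N(A+B)$ is exactly $M \ge 2n+1$, which is the ``easy Whitney'' range. The classical input is: for a smooth $n$-manifold $X$ without boundary, every smooth map $X \to \mathbb{R}^q$ with $q \ge 2n+1$ can be approximated, arbitrarily well in the $C^\infty$ topology, by a proper smooth embedding (for $q \ge 2n$ one already gets an immersion, and injectivity is then a further generic condition because the difference map $(x,x') \mapsto f(x)-f(x')$ on $X\times X$ minus the diagonal has a $2n$-dimensional domain, so its image generically misses $0$ once $q > 2n$). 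If $I^N$ were literally $\mathbb{R}^{n}$ we would simply invoke this with $X = I^N$.

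The one genuine wrinkle is that, as constructed in Section~\ref{sec-source_parameters}, $I$ is the quotient of $\mathbb{R}^{A+B}$ collapsing $\{0\}\times\mathbb{R}^B$ to its vertex, so $I^N$ is not Euclidean space but a stratified metric space: it has an open dense top stratum $\bigl((\mathbb{R}^A\setminus\{0\})\times\mathbb{R}^B\bigr)^N$, an honest $n$-manifold, and lower strata $\Sigma_T$ indexed by the set $T\subseteq\{1,\dots,N\}$ of coordinates sitting at the vertex, each a manifold of dimension $(N-|T|)(A+B) < n$, on which $S$ depends only on the non-vertex coordinates. An admissible approximant $f$ must be a genuine function on $I^N$, i.e.\ its pullback along the quotient map $Q:\mathbb{R}^{N(A+B)}\to I^N$ must be constant in $b_i$ whenever $a_i = 0$; not every small perturbation of $S$ has this property, so the Whitney argument must be carried out \emph{within} the closed subspace of smooth maps $\mathbb{R}^{N(A+B)}\to\mathbb{R}^M$ that factor through $Q$ (and, for the $I^N/S_N$ version, through the $S_N$-action as well).

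Concretely I would: (i) fix a finite-dimensional family of perturbations $L_\lambda = \ell_\lambda\circ Q$, with $\ell_\lambda$ running over restrictions to $I^N$ of $Q$-invariant polynomial maps $\mathbb{R}^{N(A+B)}\to\mathbb{R}^M$ --- the components of each $a_i$, together with monomials such as $a_i^{(k)}b_i^{(l)}$ and $\|a_i\|^2$, which remain well defined on $I^N$ and separate both points and $1$-jets of $I$; (ii) apply the parametric transversality (Thom--Sard) theorem stratum by stratum to $(w,\lambda)\mapsto d(S+L_\lambda)_w$ to get that for generic $\lambda$ the map $S+L_\lambda$ is an immersion on every $\Sigma_T$, which is possible because $M \ge 2(N-|T|)(A+B)$ there; (iii) apply it again to $(w,w',\lambda)\mapsto (S+L_\lambda)(w)-(S+L_\lambda)(w')$ on $\Sigma_T\times\Sigma_{T'}$ off the fiber-diagonal, using $\dim(\Sigma_T\times\Sigma_{T'}) \le 2n < M$, to make $S+L_\lambda$ injective on the top stratum and to force the images of distinct strata to be disjoint; and (iv) since each factor $I$ carries the metric of Equation~\ref{eq:I_metric}, an exhaustion by compact sets with partition-of-unity bookkeeping lets one keep the perturbation as $C^\infty$-small as desired while also arranging properness at infinity, yielding an embedding $f$ of $I^N$ into $\mathbb{R}^M$ within $\epsilon$ of $S$. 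The $I^N/S_N$ statement is obtained verbatim after replacing ``$Q$-invariant'' by ``$S_N$- and $Q$-invariant''.

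The step I expect to be the real obstacle is (i)--(ii): showing that restricting to \emph{invariant} perturbations still leaves a family rich enough to achieve genericity on all strata simultaneously --- that collapsing the vertices does not kill the transversality input that drives the Whitney argument. This reduces to checking that the $Q$-invariant smooth functions realize every relevant $1$-jet along each stratum, the only delicate place being near the vertices, which is exactly where invariance is an actual constraint and where the explicit monomials above are needed. Granting that, the remainder is the standard Whitney embedding/approximation machinery.
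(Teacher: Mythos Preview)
The paper does not supply a proof of this proposition at all: it is stated with a citation to the standard Whitney embedding theorem and to \cite{robinson2012topological}, and the text immediately moves on to interpretation. In effect the paper treats $I^N$ as an $N(A+B)$-dimensional manifold (it says so explicitly a few lines earlier) and invokes Whitney as a black box.

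Your proposal therefore goes well beyond what the paper does, and in a direction the paper glosses over. You correctly notice that, under Definition~\ref{def:conical_space}, the conical space $I$ collapses $\{0\}\times\mathbb{R}^B$ to a single vertex, so for $B>0$ the space $I^N$ is not a smooth manifold but a stratified space, and the classical Whitney statement does not apply verbatim. Your stratum-by-stratum transversality argument with $Q$-invariant polynomial perturbations is a reasonable route to repairing this, and you are right to flag step~(i)--(ii) as the crux: one must check that the invariant family is rich enough to realize all relevant $1$-jets near the vertex strata. That is genuinely the delicate point, and it is not addressed in the paper. So your approach is not merely different from the paper's---it is attempting to fill a gap the paper leaves open by treating $I^N$ as if it were smooth. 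Whether one regards that gap as serious depends on how literally one reads the paper's use of ``manifold'' for $I^N$; in any case, your sketch is the honest way to proceed if one wants an actual proof in the stated generality.
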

  
Intuitively, when $M > 2N(A+B)$, the only way that $S$ can fail to be injective is if there is a symmetry present in the source parameters and the samples.
This means that in principle, the sources can be completely inferred from the set of samples.

It is important to notice that the injectivity of $S$ ensures that the localization problem has a unique solution, 
irrespective of other properties of the signal model, such as its wavelength.
Therefore, any algorithm that can invert $S$ along its image is not subject to resolution limits, 
though it may still incur discretization errors or truncation errors if it is iterative.
Nevertheless, any such algorithm that converges to a true inverse of $S$ can be said to solve a general super-resolution problem.

Although the proof the Whitney embedding theorem is not constructive, 
it is typically proven using a perturbation-based argument.
These perturbations may be rather undesirable within the context of the signal model, 
for instance they might suggest adding a grating lobe to an antenna pattern.
More sensible perturbations can be obtained using a weaker form of the Whitney embedding theorem, 
called the signal embedding theorem \cite{robinson2012topological},
which allows the support of $\phi(\cdot;b_n)$ to be compact.
Perturbations need not occur outside these supports.

The signal embedding theorem has been applied to invert signal models in the same form as $S$ for source localization \cite{robinson2012topological} and antenna measurement \cite{robinson2014knowledge}.
The perturbation arguments suggest that an adaptive algorithm for inverting $S$ along its image might exist.
Nevertheless, the basic problem is that we need an algorithm for this inversion.  
Section \ref{sec-sheaf_known} uses sheaves to recast the source recovery problem into an optimization problem that can be solved effectively by an adaptive approach.  

\subsection{Sheaf signal model: known source count}
\label{sec-sheaf_known}

If we know the number of sources $N$, we can express the relationship between the observations and the source parameters by a sheaf diagram
\begin{equation}
\label{eq:fixed_source_sheaf}
\xymatrix{
&I^N/S_N \ar[dl]_{s(x_1;\cdot)} \ar[d]^{s(x_2;\cdot)} \ar[drr]^{s(x_M;\cdot)} \\\
\mathbb{R} & \mathbb{R} & \dotsb & \mathbb{R}
}
\end{equation}
The diagram expresses the fact that each observation is determined by Equation \ref{eq:general_signal_model}, 
and that no observation functionally determines any other.
One should be aware that the lack of functional dependence does not imply that the observations are independent.
Specifically, it may happen that one or more measurements (values in the second row of the diagram) suffice to determine a unique value in $I^N/S_N$,
after which point all of the other measurements can be determined from this unique value.
This is exactly the situation in which an adaptive method is warranted, since later measurements are not actually required!

Suppose we have a set of $M$ measurements at $x_1$, $x_2$, $\dotsc$, $x_M$, 
for which we have observations $z_1$, $z_2$, $\dotsc$, $z_M$ respectively.
These observations form a partial assignment to the above sheaf, which is supported on the second row.

The partial assignment is not supported at the top row, since that would be tantamount to knowing all of the source parameters.
Nevertheless, assuming that no noise is present and that we have correctly found the parameters $a_1$, $b_1$, $\dotsc$, $a_N$, $b_N$, 
then it should be the case that
\begin{equation*}
z_m = s(x_m;a_1,b_1,\dotsc,a_N,b_N)
\end{equation*}
for all $m=1, \dotsc, M$.  

In other words, the set of observations corresponds to a global section of the sheaf diagram in Equation \ref{eq:fixed_source_sheaf}.
Since global sections have zero consistency radius and the stalks of the sheaf are all metric spaces, 
any assignment which is not a global section will have positive consistency radius.

On the other hand, if the function $S$ defined in Equation \ref{eq:S_function} is injective then there is only one global section.
This means that we can recast the problem of obtaining the sources from the observations as the minimization of consistency radius.
With essentially no further work, we obtain the following result.

\begin{corollary}
\label{cor:known_source_optimization}
If the function $S$ defined in Equation \ref{eq:S_function} is injective then the solution to
\begin{equation}
\label{eq:known_source_optimization}
\argmin_{\{a_n,b_n\}_{n=1}^N} \left(\sum_{m=1}^M \left|s(x_m;a_1,b_1, \dotsc, a_n,b_n) - z_m \right|^2\right).
\end{equation}
is the correct source decomposition for the signal $z_m$ if one exists.
\end{corollary}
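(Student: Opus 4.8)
The plan is to use the fact that the objective in Equation \ref{eq:known_source_optimization} vanishes precisely when the candidate parameters, regarded as an extension of the observation assignment to the top cell of the sheaf in Equation \ref{eq:fixed_source_sheaf}, constitute a global section, and then to invoke injectivity of $S$ for uniqueness. Concretely, note that the objective is a finite sum of nonnegative real numbers, hence bounded below by $0$, and it equals $0$ at a point $w=(a_1,b_1,\dots,a_N,b_N)$ if and only if $s(x_m;w)=z_m$ for every $m=1,\dots,M$, i.e. if and only if $S(w)=(z_1,\dots,z_M)$ in the notation of Equation \ref{eq:S_function}.

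First I would use the hypothesis. Assume a correct source decomposition exists; call it $w^\ast=(a_1^\ast,b_1^\ast,\dots,a_N^\ast,b_N^\ast)$ (equivalently, its class in $I^N/S_N$), so that $s(x_m;w^\ast)=z_m$ for all $m$, that is $S(w^\ast)=(z_1,\dots,z_M)$. Evaluating the objective at $w^\ast$ yields $\sum_{m=1}^M |s(x_m;w^\ast)-z_m|^2=0$. Hence the global minimum of Equation \ref{eq:known_source_optimization} is attained, and its value is $0$.

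Next I would run the implication in the reverse direction. Let $\hat w$ be any minimizer. Since the minimum value is $0$ and each summand $|s(x_m;\hat w)-z_m|^2$ is nonnegative, every summand must vanish, so $s(x_m;\hat w)=z_m$ for all $m$, i.e. $S(\hat w)=(z_1,\dots,z_M)=S(w^\ast)$. Injectivity of $S$ — here the factored map $S'\colon I^N/S_N\to\mathbb{R}^M$, which by Proposition \ref{prop:whitney} is the generic situation once $M>2N(A+B)$ — forces $\hat w=w^\ast$ as elements of $I^N/S_N$. Therefore the minimizer is unique as a point of the quotient and coincides with the correct decomposition, which is the assertion of the corollary.

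The only real care needed is bookkeeping the two identifications built into $I^N/S_N$ from Section \ref{sec-source_parameters}: sources may be permuted freely, and a source of zero magnitude absorbs its location parameter. Consequently ``the correct source decomposition'' is well-defined only as a class in $I^N/S_N$, and it is at that level that injectivity of $S$ delivers uniqueness; a literal tuple $\{(a_n,b_n)\}$ is recovered only up to reordering and up to the conical identification. This is exactly the ``minimize to obtain the global section, then use injectivity for uniqueness'' schema described just before the corollary, specialized to the sheaf of Equation \ref{eq:fixed_source_sheaf}, so no new machinery is required — which is why the statement is phrased as a corollary.
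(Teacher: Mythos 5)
Your proof is correct and follows essentially the same route as the paper, which obtains the corollary directly from the preceding discussion: the objective is the (squared) consistency radius of the assignment, it vanishes exactly at global sections, and injectivity of $S$ guarantees the global section is unique. Your additional remark that uniqueness holds only at the level of the class in $I^N/S_N$ (up to permutation and the conical identification) is a worthwhile clarification consistent with the paper's setup.
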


The reader may correctly argue that the optimization problem in Equation \ref{eq:known_source_optimization} can be obtained easily -- though not solved -- by inspection!
However, if the number of sources is not known, 
then the correct optimization problem that obtains the source parameters is difficult to state explicitly.
However, the correct optimization problem is still a consistency radius minimization,
but for a more elaborate sheaf.
This sheaf accounts both for the unknown number of sources and the fact that (according to Section \ref{sec-free_parameters}) 
the number of measurements that should be taken depends on the (unknown) number of sources.

\subsection{Sheaf signal model encore: unknown source count}
\label{sec-sheaf_unknown}

Now let us assume that the signal is built from an unknown number of sources.
Recalling that we defined the conical space $I$ so it can represent the space of possible pairs of values for $a_n$ and $b_n$,
this means that a single source is specified by an element of the set $I$.
On the other hand, we previously argued that if there are $N$ sources, 
then the order of the sources does not matter.
Therefore, the signal for $N$ sources is determined by an element of $I^N/S_N$.
 
If $M$ real measurements are taken from a signal determined by $N$ sources,
then the previous discussions led to a simple sheaf diagram
\begin{equation*}
\xymatrix{
I^N/S_N \ar[d]\\ \mathbb{R}^M
}
\end{equation*}
The consistency radius is simply
\begin{equation*}
\|S(w_{1}, \dotsc, w_{N}) - z\|,
\end{equation*}
where $z \in \mathbb{R}^M$ is the measurement and $w_i=(a_i,b_i)$ is the parameter of the $i$-th source.
If we do not know the number of sources, then we must consider various instantiations of this sheaf.

\begin{definition}
\label{def:j_p}
The sheaf $\shf{J}_P$ is defined by the diagram
\begin{equation}
\label{eq:j_p}
\xymatrix{
I\ar[drr]_{S(\cdot)} && I^2/S_2\ar[d]_{S(\cdot,\cdot)} && I^3/S_3 \ar[dll]_{S(\cdot,\cdot,\cdot)} && \dotsb & I^P/S_P \ar[dlllll] \\
&&\mathbb{R}^M\\
}
\end{equation}
in which each of the restriction maps are given by the signal space map with different numbers of sources.
\end{definition}

Notice that the $P$ in $\shf{J}_P$ is the maximum number of sources we will attempt to consider, 
which may not have much (if anything) to do with the actual number of sources $N$.
Of course, we hope that $N \le P$ so that the actual number of sources is considered in our analysis.
The global consistency radius of an assignment to this sheaf is given by
\begin{equation*}
\sum_{i=1}^P \|S(w_{i,1}, \dotsc, w_{i,i}) - z\|,
\end{equation*}
where $z \in \mathbb{R}^M$ is the measurement and the $w_{i,j}$ is the proposed $j$-th source parameter when we are considering a representation of the signal with $i$ sources.
The global consistency radius is simply the sum of consistency radii of each subproblem, in which we consider a given number of sources.
On the other hand, each one of these subproblems is the \emph{local} consistency radius for an open set in the base space topology for $\shf{J}_P$.
If the $S$ functions are injective, then it is clear that if $N>P$, the only set upon which the local consistency radius can vanish is the set consisting of only the observation (ie. the bottom row of the sheaf diagram).
Conversely, if the $S$ functions are injective and $N \le P$, then the minimum local consistency radius possible on the open set constructed as the star over the element with stalk $I^P/S_P$ is zero.

One particular drawback of $\shf{J}_P$ though, is that values of an assignment on the different stalks in the top row of Equation \ref{eq:j_p} have nothing to do with one another.
Another expression of this fact is that the base space topology for $\shf{J}_P$ is quite large.

\subsection{Aggregation sheaves}
\label{sec-aggregation}

It is useful to construct a specific kind of sheaf that aggregates observations or parameter values under different assumptions.
In this section, we define the concept of an \emph{aggregation sheaf} that represents the idea of nested sets of parameters.

Suppose that $I$ is a conical space and $S_N$ is the group of permutations of $N$ items.
The elements of $I^N/S_N$ are equivalence classes of elements of $I^N$ under permutations.
We may choose representatives of these equivalence classes so that the components (elements of $I$) are sorted according to their magnitude norms.

For instance, a typical representative of an element of $I^N/S_N$ could be written
\begin{equation*}
\{w_1, w_2, \dotsc, w_N\}
\end{equation*}
where $\|w_i\|_I \ge \|w_j\|_I$ if $i \le j$.

A natural operation is \emph{concatenation} $\oplus: (I^M/S_M)\times(I^N/S_N) \to I^{M+N}/S_{M+N}$ given by
\begin{equation*}
\{u_1,\dotsc,u_M\} \oplus \{w_1, \dotsc,w_N\} := \{u_1, \dotsc, u_M, w_1, \dotsc, w_N\}.
\end{equation*}
We may therefore abuse notation slightly, and think of elements of $I^N/S_N$ as length $N$ \emph{multisets}: 
unordered lists of length $N$ in which duplicates are permitted.
We will therefore speak of $w_1$ as being an \emph{element of} $\{w_1,\dotsc,w_M\}$ without any possible confusion.

For a conical space $I$, metric $d_I$ induces a natural metric on $I^N/S_N$.

\begin{lemma}
\label{lem:conical_metric}
If $I$ is a conical space, then $I^N/S_N$ is a metric space in which the metric is given by
\begin{equation*}
d(u,w):= \min_{\sigma \in S_N} \sum_{n=1}^N d_I\left(u_n, w_{\sigma(n)}\right).
\end{equation*}
\end{lemma}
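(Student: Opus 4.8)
The plan is to verify the three metric axioms for the proposed function $d$ on $I^N/S_N$, taking care at two points: that $d$ is well-defined on equivalence classes, and that it separates distinct points (which is where the conical structure matters). First I would check well-definedness: if $u' = u\cdot\tau$ and $w' = w\cdot\rho$ for permutations $\tau,\rho\in S_N$, then reindexing the sum over $\sigma$ by $\rho^{-1}\sigma\tau$ shows $\min_\sigma \sum_n d_I(u'_n, w'_{\sigma(n)}) = \min_\sigma \sum_n d_I(u_n, w_{\sigma(n)})$, so the value depends only on the classes of $u$ and $w$. Reflexivity and non-negativity are immediate from the corresponding properties of $d_I$ (take $\sigma = \id$ for the upper bound $d(u,u)\le 0$). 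Symmetry follows because replacing $\sigma$ by $\sigma^{-1}$ is a bijection of $S_N$ and $d_I$ is symmetric: $\sum_n d_I(u_n, w_{\sigma(n)}) = \sum_n d_I(u_{\sigma^{-1}(n)}, w_n)$.

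The triangle inequality is the main obstacle, though it is a standard argument for such ``optimal matching'' metrics. Given $u, v, w \in I^N/S_N$, let $\sigma$ achieve the minimum in $d(u,v)$ and let $\tau$ achieve the minimum in $d(v,w)$. Then $\tau\circ\sigma$ is one particular permutation, so
\begin{equation*}
d(u,w) \le \sum_{n=1}^N d_I\bigl(u_n, w_{\tau(\sigma(n))}\bigr) \le \sum_{n=1}^N \Bigl( d_I\bigl(u_n, v_{\sigma(n)}\bigr) + d_I\bigl(v_{\sigma(n)}, w_{\tau(\sigma(n))}\bigr)\Bigr),
\end{equation*}
where the second inequality is the triangle inequality for $d_I$ applied termwise. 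The first sum on the right is exactly $d(u,v)$, and the second sum, being a reindexing of $\sum_m d_I(v_m, w_{\tau(m)})$ by the bijection $\sigma$, equals $d(v,w)$. Hence $d(u,w)\le d(u,v) + d(v,w)$. I should note that finiteness of the minimum over $S_N$ is automatic since $S_N$ is a finite group, so no compactness argument is needed.

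Finally, for the separation axiom I would argue that $d(u,w)=0$ forces $u=w$ as elements of $I^N/S_N$. If $d(u,w)=0$, pick the optimal $\sigma$; then $\sum_n d_I(u_n, w_{\sigma(n)}) = 0$ with all summands non-negative, so $d_I(u_n, w_{\sigma(n)}) = 0$ for every $n$. Because $d_I$ is a genuine metric on the conical space $I$ (Lemma on the bottleneck metric), $u_n = w_{\sigma(n)}$ in $I$ for all $n$, i.e. $u$ and $w$ are related by the permutation $\sigma$ and thus define the same point of $I^N/S_N$. The one subtlety worth flagging is that $d_I$ is only a pseudometric on $\mathbb{R}^{A+B}$ but an honest metric on the quotient $I$; since the components $u_n, w_n$ are by definition elements of $I$, the separation property of $d_I$ on $I$ is exactly what licenses this last step, and it is here that the whole construction depends on having passed to the conical quotient rather than working in $\mathbb{R}^{A+B}$.
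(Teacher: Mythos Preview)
Your proof is correct and follows essentially the same approach as the paper's own proof: both verify the metric axioms directly, using $\sigma=\id$ for reflexivity, a reindexing for symmetry, the fact that $d_I$ is a genuine metric on $I$ for the separation axiom, and the triangle inequality for $d_I$ together with a permutation composition/reindexing for the triangle inequality of $d$. Your write-up is in fact slightly more careful than the paper's (you explicitly check well-definedness on $S_N$-orbits, and your triangle inequality argument via the single candidate permutation $\tau\circ\sigma$ is cleaner than the paper's chain of $\min_{\tau}\min_{\sigma}$ manipulations), but the underlying idea is the same.
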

\begin{proof}
We simply need to establish the axioms of a metric:
\begin{itemize}
\item Because $d(u,w)$ is a sum of norms, $d(u,w) \ge 0$ is immediate.
\item Similarly, $d(w,w) = 0$ is immediately apparent by taking $\sigma = \id$.
\item If $d(u,w) = 0$, this means that there is a permutation $\sigma$ such that $u_n = w_{\sigma(n)}$ for each $n$ because $d_I$ is a metric.  This implies that $u$ and $w$ are in the same equivalence class of $I^N/S_N$.
\item A simple reindexing argument establishes that $d(u,w) = d(w, u)$.
\item Finally, $d(u,v) \le d(u,w) + d(w,v)$
\begin{eqnarray*}
d(u,v) &=&\min_{\sigma \in S_N} \sum_{n=1}^N d_I\left(u_n, v_{\sigma(n)}\right)\\
&\le&\min_{\tau \in S_N} \min_{\sigma \in S_N} \sum_{n=1}^N \left( d_I\left(u_n , w_{\tau(n)}\right) + d_I\left(w_{\tau(n)}, v_{\sigma(n)}\right)\right)\\
&\le&\min_{\tau \in S_N} \min_{\sigma \in S_N} \left(\sum_{n=1}^N d_I\left(u_n, w_{\tau(n)}\right) + \sum_{n=1}^Nd_I\left(w_{\tau(n)}, v_{\sigma(n)}\right)\right)\\
&\le&\min_{\tau \in S_N} \sum_{n=1}^N d_I\left(u_n, w_{\tau(n)}\right) + \min_{\tau \in S_N} \min_{\sigma \in S_N}\sum_{n=1}^Nd_I\left(w_{\tau(n)}, v_{\sigma(n)}\right)\\
&\le&d(u,w) + d(w,v).
\end{eqnarray*}
\end{itemize}	
\end{proof}

With these basic properties established, we now aggregate copies of $I$ according to a recipe encoded as a partially ordered set $(P,\le)$.
Intuitively, the elements of $I$ represent choices of parameters for the sources, 
and the elements of $P$ represent different possible ways to configure these sources.
The partial order $\le$ on $P$ represent nesting relations between these configurations.

For instance the relation between a configuration with $2$ sources can be thought of as a sub-configuration of one with $3$ sources in several possible ways.
The appropriate representation of the space of parameters along with these relationships is an \emph{aggregation sheaf}.

\begin{definition}
\label{def:aggregation_sheaf}
Suppose that $D_p$ is the subset of $P$ in the partially ordered set $(P,\le)$ given by
\begin{equation*}
D_p := \{q \in P : q \le p \},
\end{equation*}
and that $I$ is a conical space.
The \emph{aggregation sheaf $\shf{A}$ on $(P,\le)$ modeled on $I$} is a sheaf of metric spaces constructed according to the following recipe:
\begin{description}
\item[Stalks] $\shf{A}(p) := \left(\bigoplus_{q\in D_p} I\right)/S_{\#D_p}$
\item[Restrictions] If $\shf{S}(q)$ is a multiset of $N$ elements $\{w_1, \dotsc, w_N\}$ and $\shf{S}(p)$ is a multiset of $N+k$ elements, then
\begin{equation*}
\left(\shf{S}(q\le p)\right)\left(\{w_1, \dotsc, w_N\}\right) := \{w_1, \dotsc, w_N, (0,b_1), \dotsc, (0,b_k)\},
\end{equation*}
where $b_1, \dotsc, b_k$ are completely arbitrary, since they all refer to the vertex of $I$.  Briefly, $\shf{S}(q\le p)$ copies the elements of $\shf{S}(q)$, padding with copies of the vertex of $I$ as needed.  We call $\shf{S}(q\le p)$ the \emph{inclusion} of $\shf{S}(q)$ into $\shf{S}(p)$.
\end{description}
\end{definition}
According to Lemma \ref{lem:conical_metric}, the stalks of an aggregation sheaf are metric spaces.  
It is immediate that the restrictions are continuous with respect to the metrics on the stalks.

For our purposes here -- decomposing signals formed according to Equation \ref{eq:general_signal_model} -- we want to understand the most efficient way to solve certain consistency radius minimization problems.  
Constraining the assignments that must be tested as the optimization problem is solved can yield a performance boost, 
with greedy algorithms being preferred in the literature due to their speed of convergence.
However, greedy algorithms can fail spectacularly if their assumptions are not met.
We seek both the correct optimization problem to decompose Equation \ref{eq:general_signal_model} 
and a flexible algorithmic framework that becomes greedy when appropriate, but fails more gracefully into a non-greedy setting if needed.
This graceful degradation is governed by properties of the assignments which minimize consistency radius;
a key starting point is a sorting property for assignments of aggregation sheaves.

\begin{proposition} (Sorting property)
\label{prop:sorting}
Suppose that $\shf{A}$ is an aggregation sheaf for a finite partially ordered set $(P,\le)$ modeled on the conical space $I$, 
that $(P,\le)$ has a unique maximal element $p'$, and that $N = \# P$.
Let $a$ be an assignment to $\shf{A}$ supported on $p'$.  
Without loss of generality, suppose that $a_{p'} = \{w_1, w_2, \dotsc x_N\}$, where $\|w_i\|_I \ge \|w_j\|_I$ if $i \le j$,
recalling that $\|\cdot\|_I$ is the magnitude norm of $I$.
Any extension $b$ of $a$ to all of $\shf{A}$ which has minimal consistency radius will have the property that if $q \le p$ as elements in $P$,
\begin{equation*}
b_p = b_q \oplus \{w_{i_1},\dotsc\},
\end{equation*}
where every element in the set $w_{i_1},\dotsc$ has magnitude norm less than or equal to that of every element in $b_q$, 
and each $w_{i_j}$ is an element of $a_{p'}$.
\end{proposition}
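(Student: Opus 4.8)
The plan is to argue by an exchange/rearrangement argument: if an extension $b$ of minimal consistency radius did not have the claimed sorted structure along some cover relation $q \le p$, one could produce another extension with no larger consistency radius that does, and iterate until the structure holds everywhere. First I would fix notation: since $a$ is supported on $p'$ and $p'$ is the unique maximal element, every stalk $\shf{A}(p)$ for $p \le p'$ receives, via the composite restriction $\shf{S}(p \le p')$, the requirement that $b_p$ be a sub-multiset of $a_{p'}$ padded with vertices of $I$; more precisely, consistency across the edge $p \le p'$ forces $d\big(\shf{S}(p\le p')(b_p),\, a_{p'}\big)$ to contribute to the consistency radius, and because $a_{p'}$ has exactly $N = \#P$ genuine entries while $\#D_p \le N$, the cheapest choice of $b_p$ (the one making this term as small as possible, ideally zero) is to let $b_p$ consist of some $\#D_p$-element sub-multiset of $\{w_1,\dots,w_N\}$, with the remaining slots filled by vertices. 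I would make this precise using Lemma \ref{lem:conical_metric} and the fact (noted after Definition \ref{def:conical_space}) that $\|(0,b)\|_I = 0$, so padding entries cost nothing in the magnitude norm and match vertices of $a_{p'}$'s padding at zero cost.

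Next I would handle the nesting. Given $q \le p \le p'$, the restriction $\shf{S}(q \le p)$ embeds $b_q$ into $b_p$ by appending vertices, so consistency along $q \le p$ pushes $b_p$ to agree with $b_q$ on a $\#D_q$-sized sub-multiset. Combined with the previous paragraph, a minimal-consistency-radius $b$ wants $b_q$ to be a sub-multiset of $b_p$ (up to vertex padding) and both to be sub-multisets of $a_{p'}$. The remaining content of the proposition is the \emph{sorting} claim: the "new" elements $b_p \setminus b_q$ are precisely those entries $w_{i_j}$ of $a_{p'}$ of smallest magnitude norm not already used in $b_q$. Here is the exchange step. Suppose $b$ minimizes consistency radius but for some cover relation $q \le p$ there is an element $u \in b_q$ and an element $w \in b_p \setminus b_q$ with $\|w\|_I > \|u\|_I$. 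Since $D_q \subseteq D_p$ and everything below $q$ inherits $b_q$'s entries through iterated restrictions, swapping $u$ and $w$ throughout the down-set $D_q$ — that is, replacing $u$ by $w$ in $b_{q'}$ for every $q' \le q$ — yields a new assignment $b'$. I would then show $b'$ has consistency radius no larger than that of $b$: the only edges whose terms change are those internal to $D_q$ and the edges from $D_q$ up into $D_p \setminus D_q$, and because $d$ on $I^k/S_k$ is computed by an optimal matching (Lemma \ref{lem:conical_metric}), re-matching $w$ to where $u$ sat (and $u$ to where $w$ sat) changes each affected term only by terms bounded using the triangle inequality; crucially, since both $u$ and $w$ lie in $a_{p'}$, all the "target" multisets up at $p'$ already contain both, so after the swap the optimal matchings can be rechosen to keep every term at its previous value — the swap is cost-neutral at the top and can only help, or not hurt, below. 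Iterating this exchange (it strictly decreases, say, the number of "inversions" $\sum_{q\le p}\#\{(u,w): u\in b_q, w\in b_p\setminus b_q, \|w\|_I>\|u\|_I\}$, a nonnegative integer) terminates at an assignment with the sorting property, whose consistency radius is $\le$ that of $b$; by minimality of $b$ it equals it, and I would then note that a minimal extension with the sorted structure exists, which is all that is claimed (the statement asserts "any extension of minimal consistency radius will have the property" — so strictly I must show the property holds for \emph{every} minimizer, which follows because the exchange is available on any minimizer and the terminal sorted assignment is itself a minimizer, forcing the inversions to have been zero to begin with, provided magnitude norms are generic; in the degenerate case of ties one invokes the "without loss of generality" clause to choose the representative).

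The main obstacle I anticipate is controlling the consistency-radius bookkeeping precisely during the exchange: because the metric on each stalk is a minimum over permutations (an optimal-transport-style cost), I must verify that after swapping $u$ and $w$ the optimal matchings on \emph{all} affected edges can be simultaneously chosen so that no term increases — i.e., that the local swap is globally cost-nonincreasing. The clean way to do this is to observe that $a_{p'}$ contains both $u$ and $w$, so for every edge $q' \le p'$ the "upstairs" multiset is fixed and symmetric in $u,w$; hence composing with the transposition $(u\,w)$ on the downstairs side and the corresponding transposition on the matching realigns everything at zero marginal cost, while edges wholly inside $D_q$ see $b_q$ and its restrictions transform compatibly. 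A secondary subtlety is the treatment of vertex-padding entries: since $\|(0,b)\|_I=0$ and $d_I((0,b),(0,b'))=0$, these are "free" and never participate as the larger-norm element in an inversion, so they can be ignored throughout. Finally, I would remark that the hypothesis of a unique maximal element $p'$ is exactly what lets me anchor every down-set's entries to sub-multisets of the single multiset $a_{p'}$; without it the statement would need to be relativized to each maximal element separately.
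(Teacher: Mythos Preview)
Your approach is genuinely different from the paper's, and considerably more elaborate. The paper does not run an exchange argument at all: it reduces immediately to a single cover relation $q \lessdot p$ (so the stalk sizes differ by exactly one), fixes $b_p = \{w_1,\dots,w_{N+1}\}$ sorted by descending magnitude norm, and computes the single edge term
\[
d\bigl(\shf{A}(q\le p)\,b_q,\; b_p\bigr)
= \min_{\sigma\in S_{N+1}}\Bigl(\textstyle\sum_{n=1}^{N} d_I(u_n,w_{\sigma(n)}) + \|w_{\sigma(N+1)}\|_I\Bigr).
\]
Taking $u_n=w_n$ gives the upper bound $\|w_{N+1}\|_I$; on the other hand any $\sigma$ forces some $w_{\sigma(N+1)}$ to be matched to the padding vertex, so the expression is bounded below by $\min_n\|w_n\|_I=\|w_{N+1}\|_I$. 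That two-line squeeze is the whole proof: dropping the smallest-norm element minimizes the edge term, and the paper simply declares this suffices. Your plan instead tries to control the \emph{global} consistency radius via swaps on down-sets, which is a heavier mechanism aimed at a stronger conclusion (you are more explicit about why every $b_p$ should consist of entries drawn from $a_{p'}$, something the paper's proof leaves tacit).

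The cost of your route is exactly the obstacle you identify, and I do not think your sketch closes it. When you swap $u\leftrightarrow w$ throughout $D_q$, you must control every edge $q'\le r$ with $q'\in D_q$ and $r\notin D_q$, not just $q\le p$ and edges up to $p'$. Your ``symmetry in $(u,w)$'' fix works when the upstairs multiset $b_r$ contains both $u$ and $w$ (as $a_{p'}$ does), but for a general $r\notin D_q$ you have no such guarantee without an additional top-down induction that you have not set up; in a non-linear poset there can be $r$ incomparable to $q$ for which $b_r$ contains $u$ but not $w$, and then the swap can strictly increase that edge term. For the linear poset underlying the paper's main application this complication evaporates (every $r\notin D_q$ lies above $p$, and one can induct downward from $p'$), so your argument can be repaired there; but as written it does not handle the general $(P,\le)$ in the hypothesis. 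Your final remark about needing generic magnitude norms to force zero inversions is also a genuine soft spot---though in fairness the paper's proof exhibits a minimizer with the sorted structure rather than proving it is the \emph{only} minimizer, so the proposition is being read somewhat loosely on both sides. The practical advice: the paper's local squeeze on a single cover relation is a one-paragraph replacement for your entire exchange machinery.
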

\begin{proof}
It suffices to establish the conclusion for an arbitrary pair $q \le p$ in $P$ for which there is no other $r$ in $P$ ``between'' $q$ and $p$, 
that is,  $q \le r \le p$ implies $r=q$ or $r=p$.
Under this hypothesis, the lengths of $b_p$ and $b_q$ differ by exactly $1$.
The sheaf diagram in the vicinity of $p$ and $q$ is therefore of the form
\begin{equation*}
\xymatrix{
I^N/S_N \ar[rr]^-{\shf{A}(q \le p)} && I^{N+1}/S_{N+1}.
}
\end{equation*}  
Without loss of generality, suppose that $b_p = \{w_1, w_2, \dotsc, w_{N+1}\}$ in which the elements are sorted in order of descending magnitude norms.
The proposition will be established if we can conclude that $b_q = \{w_1, w_2, \dotsc, w_N\}$.

Suppose that $b_q = \{u_1, u_2, \dotsc u_N\}$, so that
\begin{eqnarray*}
d\left(\left(\shf{A}(q\le p)\right) b_q, b_p\right) &=& \min_{\sigma\in S_{N+1}} \sum_{n=1}^{N+1} d_I\left( \left(\left(\shf{A}(q\le p)\right) b_q\right)_n, w_{\sigma(n)}\right)\\
&=&\min_{\sigma\in S_{N+1}} \left( \sum_{n=1}^{N} d_I\left( u_n, w_{\sigma(n)}\right) + d_I\left((0,b),w_{\sigma(N+1)}\right)\right)\\
&=&\min_{\sigma\in S_{N+1}} \left( \sum_{n=1}^{N} d_I\left( u_n, w_{\sigma(n)}\right) + \|w_{\sigma(N+1)}\|_I\right).
\end{eqnarray*} 
Notice that since $\shf{A}(q \le p)$ is an inclusion, its output contains one extra copy of the vertex of $I$ as padding.  
This extra vertex copy is paired with $w_{\sigma(N+1)}$, resulting in the magnitude norm in the above calculation.
Since we sorted the $w_n$, selecting $u_n = w_n$ for $n=1, \dotsc, N$ means that the above distance is not more than $\|w_{N+1}\|_I$.
On the other hand, notice that
\begin{equation*}
\min_{\sigma\in S_{N+1}} \left( \sum_{n=1}^{N} d_I\left( u_n, w_{\sigma(n)}\right) + \|w_{\sigma(N+1)}\|_I\right) \ge \min \{\|w_n\|_I : n = 1, \dotsc, N+1\} = \|w_{N+1}\|_I.
\end{equation*}
Therefore, selecting $u_n = w_n$ for $n=1, \dotsc, N$ results in the minimum value of $d\left(\left(\shf{A}(q\le p)\right) b_q, b_p\right)$.
Since this is the contribution of $b_q$ to the consistency radius of the entire assignment $b$, we conclude that it minimizes consistency radius.
\end{proof}

According to Proposition \ref{prop:sorting}, the assignments with minimal consistency radius on the aggregation sheaf
\begin{equation*}
\xymatrix{
I \ar[r] & I^2/S_2 \ar[r] & \dotsb \ar[r] & I^N/S_N
}
\end{equation*}
are always of the form
\begin{equation*}
\xymatrix{
\{w_1\} \ar[r] & \{w_1,w_2\} \ar[r] & \dotsb \ar[r] & \{w_1,\dotsc, w_N\}
}
\end{equation*}
where $\|w_i\|_I \ge \|w_j\|_I$ if $i \le j$.  
That is, the elements of $I$ added as one moves rightward in the diagram are sorted in descending order.

\subsection{Sheaf model for nested subproblems}
\label{sec-sheaf_nested}

To remedy the issues with $\shf{J}_P$, and to create a sheaf model suitable for greedy algorithms, 
let us repackage the top row of Equation \ref{eq:j_p} using an aggregation sheaf.
Since we do not know the true number of sources, it makes sense to encapsulate all possibilities in a sheaf $\shf{I}$.
This sheaf is an aggregation sheaf
\begin{equation*}
\xymatrix{
I \ar[r] & I^2/S_2 \ar[r] & I^3/S_3 \ar[r] & \dotsb \ar[r] & I^n/S_n \ar[r] & \dotsb
}
\end{equation*}
which represents all possible sets of sources present in the signal along with the nesting structure.  
At any place in the diagram above, 
there are a finite number of sources, 
and these sources are present in all positions to the right.  

This sheaf $\shf{I}$ can be connected to the measurements by replicating the diagram in Equation \ref{eq:fixed_source_sheaf}, 
with the added constraint that duplicate observations are connected by identity maps.
For two observations, this results in the diagram
\begin{equation*}
\xymatrix{
I \ar[rr]\ar[d]_{s(x_1;\cdot)} \ar[ddr]_(0.65){s(x_2;\cdot)} && I^2/S_2\ar[rr]\ar[d]_{s(x_1;\cdot,\cdot)} \ar[ddr]_(0.65){s(x_2;\cdot,\cdot)} && I^3/S_3 \ar[rr]\ar[d]_{s(x_1;\cdot,\cdot,\cdot)} \ar[ddr]_(0.65){s(x_2;\cdot,\cdot,\cdot)} && \dotsb \\
\mathbb{R} \ar[rr]^{\id} && \mathbb{R} \ar[rr]^{\id} && \mathbb{R} \ar[rr]^{\id} && \dotsb \\ 
&\mathbb{R} \ar[rr]^{\id} && \mathbb{R} \ar[rr]^{\id} && \mathbb{R} \ar[rr]^{\id} && \dotsb \\
}
\end{equation*}
Additional observations can be supplied by adding more rows to the diagram.

If we collapse the identity maps in the bottom rows, and combine the bottom rows, the above diagram has the same global sections as the sheaf $\shf{K}_P$ defined below.

\begin{definition}
\label{def:k_p}
The sheaf $\shf{K}_P$ is defined by the diagram
\begin{equation}
\label{eq:k_p}
\xymatrix{
I \ar[rr]\ar[drr]_{S(\cdot)} && I^2/S_2\ar[rr]\ar[d]_{S(\cdot,\cdot)} && I^3/S_3 \ar[rr]\ar[dll]_{S(\cdot,\cdot,\cdot)} && \dotsb & I^P/S_P \ar[dlllll] \\
&&\mathbb{R}^M\\
}
\end{equation}
in which the top row is an aggregation sheaf, and the restrictions from the top to bottom row are given by the signal map $S$.
\end{definition}

It should be immediately apparent to the reader that the consistency radius for this new sheaf $\shf{K}_P$ is rather formidable to write explicitly,
although it is straightforward to construct systematically. 
Specifically, because of commutativity all restriction maps from the top to bottom row are actually visible in the diagram.
Moreover, the top row is an aggregation sheaf, so its consistency radius can be written explicitly using the construction in Section \ref{sec-aggregation}.

Consider an assignment to the above sheaf $\shf{K}_P$, in which the values in the stalks in the top row are given by
\begin{equation*}
\{w_{1,1}\}, \{w_{2,1}, w_{2,2}\}, \{w_{3,1}, w_{3,2}, w_{3,3}\}, \dotsc
\end{equation*}
and the value in the bottom row (the $\mathbb{R}^2$) is $z$.
The consistency radius of this assignment is
\begin{equation}
\label{eq:consistency_radius_fixed_measurements}
\sum_{i=1}^N \sum_{j=i+1}^N \min_{\sigma \in S_{j}}\left( \sum_{k=1}^{i} d_I\left(w_{i,k}, w_{j,\sigma(k)}\right) + \sum_{m=i+1}^{j}\|w_{j,\sigma(m)}\|_I \right) + \sum_{i=1}^N \|S(w_{i,1}, \dotsc, w_{i,i}) - z\|.
\end{equation}

Let us compare $\shf{K}_P$ with the sheaf $\shf{J}_P$ defined earlier in Equation \ref{eq:j_p}.  
Although the base spaces of these two sheaves have different topologies, 
they are written on the same set of elements.
The only difference is the partial order on these elements.
In particular, every order relation present in the base of $\shf{J}_P$ (arrows from the top to the bottom row) is present in the base of $\shf{K}_P$,
though not conversely. 
This means that the identity map from the base space of $\shf{J}_P$ to the base space of $\shf{K}_P$ is order preserving.
Furthermore, notice that the stalks and restrictions in $\shf{J}_P$ are present in exactly the same form as in $\shf{K}_P$,
though again not conversely.
Therefore, there is a sheaf morphism $m: \shf{K}_P \to \shf{J}_P$ defined along this identity map, 
simply by defining each component map to be an identity map on corresponding stalks.

\begin{proposition}
\label{prop:k_p_j_p}
The sheaf morphism $m: \shf{K}_P \to \shf{J}_P$ induces an isomorphism on the spaces of global sections provided that $S : I^P/S_P \to \mathbb{R}^M$ is injective.
\end{proposition}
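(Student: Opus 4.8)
The plan is to exhibit mutually inverse maps between the spaces of global sections of $\shf{K}_P$ and $\shf{J}_P$, both built from the morphism $m$ and its ``reverse'' assembled by hand, and to verify that the injectivity of $S$ is exactly what makes the reverse map land in the global sections of $\shf{K}_P$. First I would recall what a global section of each sheaf is. A global section of $\shf{J}_P$ is a choice of element $a_i \in I^i/S_i$ for $i = 1, \dotsc, P$ together with $z \in \mathbb{R}^M$, subject only to $S(a_i) = z$ for every $i$; there are no relations among the $a_i$ themselves, since the base of $\shf{J}_P$ has no order relations in the top row. A global section of $\shf{K}_P$ is a choice of the same data $(a_1, \dotsc, a_P, z)$ subject to $S(a_i) = z$ for all $i$ \emph{and} to the aggregation-sheaf relations along the top row, namely $\shf{A}(i \le i{+}1)(a_i) = a_{i+1}$, i.e. each $a_{i+1}$ is obtained from $a_i$ by appending a copy of the vertex of $I$.

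Since $m$ is defined along an order-preserving identity on base spaces with identity component maps on stalks, the induced map on global sections is just ``forget the extra top-row relations,'' so it is manifestly injective: if two global sections of $\shf{K}_P$ push forward to the same section of $\shf{J}_P$ they have the same underlying data $(a_1, \dotsc, a_P, z)$, hence are equal. The content is surjectivity. So the main step is: given a global section $(a_1, \dotsc, a_P, z)$ of $\shf{J}_P$, show it automatically satisfies the aggregation relations, so that it \emph{is} (the image of) a global section of $\shf{K}_P$. Here is where injectivity of $S$ enters. We have $S(a_i) = z = S(a_{i+1})$. Now $\shf{A}(i \le i{+}1)(a_i)$ is an element of $I^{i+1}/S_{i+1}$ whose image under $S(\cdot, \dotsc, \cdot)$ is $S(a_i, (0,b)) = S(a_i) + a \cdot \phi(\cdot; b)\big|_{a=0} = S(a_i) = z$, because appending a vertex element $(0,b)$ contributes the zero source to the superposition in Equation~\eqref{eq:general_signal_model}. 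Thus both $\shf{A}(i \le i{+}1)(a_i)$ and $a_{i+1}$ are preimages of $z$ under the map $S : I^{i+1}/S_{i+1} \to \mathbb{R}^M$. If this map is injective they coincide, and the aggregation relation holds. Iterating over $i = 1, \dotsc, P-1$ gives the full set of relations, so the section of $\shf{J}_P$ lifts to $\shf{K}_P$; composing with $m$ recovers it, proving surjectivity. Finally I would note that since $m$ acts as the identity on stalks, this bijection on global sections is an isomorphism of the relevant structures (e.g. as metric spaces, or as whatever category of ``spaces of sections'' is in play), completing the proof.

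The hard part --- or rather, the only subtle point --- is justifying that injectivity of $S$ on $I^P/S_P$ gives injectivity of $S$ on each $I^{i+1}/S_{i+1}$ for $i+1 \le P$, which one needs for the argument above at every level, not just the top. This follows because $S : I^{i+1}/S_{i+1} \to \mathbb{R}^M$ factors through $\shf{A}(i{+}1 \le P) : I^{i+1}/S_{i+1} \to I^P/S_P$ followed by $S : I^P/S_P \to \mathbb{R}^M$ (appending vertices then mapping is the same as mapping the smaller multiset, since vertices contribute nothing), and the inclusion $\shf{A}(i{+}1 \le P)$ is injective on equivalence classes; a composite whose outer map is injective and whose inner map is injective is injective. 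I would state this as a short preliminary observation before running the main lifting argument. Everything else is bookkeeping: checking that $m$ on sections is the identity on data, and that the two constructions are inverse.
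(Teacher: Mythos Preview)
Your proof is correct and rests on the same key observation as the paper's: padding a multiset with copies of the vertex of $I$ does not change its image under $S$, so injectivity of $S$ on $I^P/S_P$ forces every level of a $\shf{J}_P$-section to be a padding of the level below it, which is exactly the aggregation relation. The paper organizes this slightly differently---it argues that a global section of either sheaf is determined by its value on the $I$ stalk, rather than verifying the relations $a_{i+1} = \shf{A}(i \le i{+}1)(a_i)$ one at a time---but the substance is identical; your version is in fact a bit more explicit about why injectivity of $S$ on $I^P/S_P$ descends to injectivity on each $I^{i+1}/S_{i+1}$, a step the paper leaves implicit.
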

\begin{proof}
The fact that $m$ takes each section of $\shf{K}_P$ to a section of $\shf{J}_P$ is a general fact for sheaf morphisms 
and is true regardless of the injectivity of $S$.

The other direction, that sections of $\shf{J}_P$ also correspond to sections of $\shf{K}_P$, requires special consideration of the sheaves involved.
First of all, notice that global sections of $\shf{K}_P$ are determined by their value on $I$, 
since this is the stalk over the unique minimal element in the base partial order.
We merely need to show that a global section of $\shf{J}_P$ is also determined by the value on the corresponding element in its base partial order,
even though that element is not the \emph{unique} minimal element.

To this end, suppose that we have a global section of $\shf{J}_P$.
Such a global section consists of the following information:
\begin{itemize}
\item A value $z \in \mathbb{R}^M$, and
\item A set of values $\{w_{n,j}\}_{j=1}^n$ in $I^n/S_n$ for each $n=1, \dotsc, P$, each of which consists of a pair $w_{n,j} = (a_{n,j},b_{n,j})$ because $I$ is a conical space,
\end{itemize}
such that
\begin{equation*}
z = S(w_{n,1}, \dotsc, w_{n,n})
\end{equation*}
for each $n= 1, \dotsc, P$.
Because of the injectivity of $S$, this means that in particular, we have that
\begin{equation*}
z = S(w_{p,1}, \dotsc, w_{p,P}).
\end{equation*}
By the injectivity of $S$, for this particular value of $z$, no other set $\{w_{P,j}\}_{j=1}^P$ will satisfy the equation.
On the other hand, we have that
\begin{equation*}
z = S(w_{1,1}),
\end{equation*}
and again uniquely so.
This means that $w_{1,1}$ must be present in $\{w_{P,j}\}_{j=1}^P$, and furthermore must be the only non-vertex element (recall that the vertex is the equivalence class consisting of all elements with $a_{i,j}=0$).
We merely need to notice that this means that the global section is determined by its value $w_{1,1}$ on $I$.
\end{proof}

The proof of Proposition \ref{prop:k_p_j_p} is easily extended to handle the situation of local sections of $\shf{K}_P$; 
merely trim off the left side of the diagram!

\begin{corollary}
Every local section of $\shf{K}_P$ is also a local section of $\shf{J}_P$.
\end{corollary}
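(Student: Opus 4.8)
The plan is to obtain this as an essentially formal consequence of the morphism $m: \shf{K}_P \to \shf{J}_P$ constructed just above, reusing the observation already made in the proof of Proposition~\ref{prop:k_p_j_p} that a sheaf morphism carries sections to sections. The only extra ingredient needed to promote that statement from \emph{global} sections to \emph{local} ones is a comparison of the two base topologies, so the whole argument splits into (i) checking that every open set of the base of $\shf{K}_P$ is also an open set of the base of $\shf{J}_P$, and (ii) pushing a given local section through $m$ stalkwise.

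For (i), recall that a local section lives over an open set $U$ of the base, i.e.\ over an up-set of the order, and is required to be compatible with every restriction map between elements of $U$. As remarked before the corollary, the identity map on the underlying set of elements is order preserving from the base of $\shf{J}_P$ into the base of $\shf{K}_P$: every relation $q\le p$ present in the base of $\shf{J}_P$ is also present in the base of $\shf{K}_P$. Consequently, if $U$ is up-closed for the (larger) order of $\shf{K}_P$, then it is up-closed for the (smaller) order of $\shf{J}_P$ as well, since $q\le_{\shf{J}}p$ implies $q\le_{\shf{K}}p$, which in turn forces $p\in U$ whenever $q\in U$. Hence $U$ is a genuine open set in the base of $\shf{J}_P$.

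For (ii), let $\alpha$ be a local section of $\shf{K}_P$ supported on such a $U$. Because $m$ is defined along the identity and all of its stalk components are identity maps, $m(\alpha)$ is literally the same family of values, now viewed as an assignment to $\shf{J}_P$ over $U$. The consistency conditions one must verify for $\shf{J}_P$ over $U$ are exactly the top-to-bottom restrictions $S(\cdot,\dotsc)$ between elements of $U$, and these coincide with the corresponding restrictions of $\shf{K}_P$ --- this identification is precisely what makes $m$ a morphism. Since $\alpha$ satisfies all of $\shf{K}_P$'s restriction constraints over $U$, in particular these, the image $m(\alpha)=\alpha$ satisfies all of $\shf{J}_P$'s restriction constraints over $U$, so it is a local section of $\shf{J}_P$.

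There is no serious obstacle here; the content is light and, in contrast with Proposition~\ref{prop:k_p_j_p}, it uses no hypothesis on $S$. The one point deserving care --- and where an error would most plausibly slip in --- is the direction of the order extension in step (i), so that one concludes that $\shf{K}_P$-open sets are $\shf{J}_P$-open and not the reverse. The reverse inclusion of \emph{sections}, which would upgrade this to an isomorphism, is exactly what ``trimming off the left side'' of the diagram in the proof of Proposition~\ref{prop:k_p_j_p} yields, and that argument does require $S$ to be injective.
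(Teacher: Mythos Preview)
Your argument is correct and matches the paper's intent: the paper offers only the one-line hint ``merely trim off the left side of the diagram,'' which is shorthand for exactly the two observations you spell out --- that the open sets of $\shf{K}_P$ (being up-sets for the larger order) are automatically open in $\shf{J}_P$, and that over such a set the morphism $m$ (identity on stalks) carries a $\shf{K}_P$-section to a $\shf{J}_P$-section because the latter imposes strictly fewer restriction constraints. Your explicit check of the direction of the order inclusion in step (i) is the one place where care is warranted, and you handle it correctly.
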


This means that if we use $\shf{K}_P$ instead of $\shf{J}_P$, we essentially have the same information, simply encapsulated in a smaller topology.
The smaller topology might help guide our attempts at finding extensions of assignments with minimal consistency radus, 
and so may be rather convenient even though the consistency radius formula for $\shf{K}_P$ is more complicated than that of $\shf{J}_P$.

\subsection{Determining the number of source adaptively}
\label{sec-source_count}

The general strategy for determining source parameters from measurements is to minimize consistency radius in either the sheaf $\shf{J}_P$ or the sheaf $\shf{K}_P$, subject to the assignment being determined at the observation.  
In cases where the source magnitudes are widely varying, it is likely that a greedy approach is possible (and may be preferable).
Minimizing consistency radius on $\shf{K}_P$ will tend to exhibit the sorting property (Proposition \ref{prop:sorting}), and
therefore will prioritize sources with large magnitude first.
On the other hand, if the source magnitudes are all similar, then a greedy approach is likely to fail when source locations are similar.
In this situation, minimizing consistency radius in $\shf{K}_P$ may yield misleading results; instead minimizing consistency radius on $\shf{J}_P$ would be preferable.

\begin{proposition}
\label{prop:unknown_source_optimization}
Suppose that the number of sources is fixed at $N$ and that $z \in \mathbb{R}^M$ is given by
\begin{equation*}
z= S(a_1,b_1, \dotsc, a_N,b_N),
\end{equation*}
where $S$ is given by Equation \ref{eq:S_function}.

If we assign $z \in \mathbb{R}^M$ to represent the measurement in the sheaf $\shf{J}_P$ (see Equation \ref{eq:j_p}) or $\shf{K}_P$ (see Equation \ref{eq:k_p}) then there is an extension to a global assignment in which the local consistency radius vanishes for all sufficiently small open sets provided that $P \ge N$.
\end{proposition}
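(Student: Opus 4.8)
The plan is to prove this directly and constructively: I would exhibit a single global assignment that extends the datum $z$ and then verify that its local consistency radius is zero on the appropriate small open sets. Injectivity of $S$ plays no role here — it governs uniqueness, not existence — and the argument uses only the superposition structure of the signal model in Equation \ref{eq:general_signal_model}. First I would extend the partial assignment, which is supported only on the stalk $\mathbb{R}^M$ and carries the value $z$ there, by setting, for each $k$ with $N \le k \le P$, the value on the stalk $I^k/S_k$ equal to the multiset
\begin{equation*}
v_k := \{(a_1,b_1),\dotsc,(a_N,b_N)\} \oplus \{\underbrace{(0,0),\dotsc,(0,0)}_{k-N}\},
\end{equation*}
i.e.\ the true source parameters padded with $k-N$ copies of the vertex of $I$; on each $I^k/S_k$ with $k<N$ I would assign anything convenient (say the first $k$ true sources), and it will be clear that these choices are immaterial.

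The one genuine computation is that padding with the vertex of $I$ does not alter the signal: since the signal is a superposition over sources and a source of zero magnitude contributes nothing, $S(v_k)=S(a_1,b_1,\dotsc,a_N,b_N)=z$ for every $k$ with $N\le k\le P$. With this in hand I would take the relevant small open set to be the star over the element with stalk $I^P/S_P$; in both $\shf{J}_P$ and $\shf{K}_P$ the only element lying above it is the one with stalk $\mathbb{R}^M$, so this star is $\{I^P/S_P,\mathbb{R}^M\}$, and the unique restriction internal to it, $S\colon I^P/S_P\to\mathbb{R}^M$, contributes $\|S(v_P)-z\|=0$ to the local consistency radius. For $\shf{K}_P$ I would push a little further and check vanishing on the star over each element with stalk $I^k/S_k$, $N\le k\le P$, namely $\{I^k/S_k,\dotsc,I^P/S_P,\mathbb{R}^M\}$: each aggregation restriction $I^j/S_j\to I^{j+1}/S_{j+1}$ with $j\ge N$ is respected because $v_{j+1}=v_j\oplus\{(0,0)\}$ is exactly the inclusion of $v_j$ in Definition \ref{def:aggregation_sheaf}, and each restriction into $\mathbb{R}^M$ sends its argument to $z$. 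Since local consistency radius can only decrease as the open set shrinks, it then vanishes on all sufficiently small open sets, as claimed.

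The main — really the only — obstacle is the bookkeeping that invokes the hypothesis $P\ge N$: one must confirm that the padded multisets $v_k$ are genuinely compatible with the aggregation restrictions of $\shf{K}_P$, i.e.\ that ``pad with the vertex of $I$'' coincides with the inclusion map of Definition \ref{def:aggregation_sheaf}, and that the values placed on the stalks $I^k/S_k$ with $k<N$ never lie in the star over the element with stalk $I^P/S_P$ and so are unconstrained. I would also record in the proof what is \emph{not} being asserted: the local consistency radius need not vanish on the star over the element with stalk $I^k/S_k$ for $k<N$, since $z$ need not be realizable by fewer than $N$ sources, so ``sufficiently small'' must be read as ``refining the star over the element with stalk $I^P/S_P$'', consistently with the earlier discussion of these two sheaves.
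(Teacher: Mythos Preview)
Your proposal is correct and follows essentially the same approach as the paper: both construct the extension by padding the true source parameters $\{(a_n,b_n)\}_{n=1}^N$ with copies of the vertex of $I$ on each stalk $I^k/S_k$ with $k\ge N$, and then verify that every restriction internal to the star over $I^N/S_N$ is satisfied exactly (the aggregation inclusions because padding with the vertex is precisely the restriction map of Definition~\ref{def:aggregation_sheaf}, and the signal maps because zero-magnitude sources contribute nothing to $S$). The paper carries this out by writing the local consistency radius explicitly as in Equation~\eqref{eq:consistency_radius_fixed_measurements_windowed} and checking term-by-term, whereas you argue more structurally, but the content is the same; one small slip in your final paragraph is that ``sufficiently small'' should be read as ``contained in the star over $I^N/S_N$'' rather than ``refining the star over $I^P/S_P$'', since you have already established vanishing on the former, larger open set.
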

\begin{proof}
Establishing the statement for $\shf{K}_P$ automatically carries over to $\shf{J}_P$, so we merely consider the case of $\shf{K}_P$.  That is given by
\begin{equation*}
\xymatrix{
I \ar[rr]\ar[drr]_{S(\cdot)} && I^2/S_2\ar[rr]\ar[d]_{S(\cdot,\cdot)} && I^3/S_3 \ar[rr]\ar[dll]_{S(\cdot,\cdot,\cdot)} && \dotsb \ar[r] & I^P/S_P  \ar[dlllll] \\
&&\mathbb{R}^M\\
}
\end{equation*}
To that end, $z\in \mathbb{R}^M$ is assigned in the bottom row of the diagram.
This is merely a consequence of the structure of the global consistency radius, initially shown in Equation \ref{eq:consistency_radius_fixed_measurements}.
Let us name the values in the assignment to the top row of the sheaf diagram 
\begin{equation*}
\{(a_{1,1},b_{1,1})\}, \{(a_{2,1},b_{2,1}),(a_{2,2},b_{2,2})\}, \{(a_{3,1},b_{3,1}),(a_{3,2},b_{3,2}),(a_{3,3},b_{3,3})\}, \dotsc, \{(a_{P,1},b_{P,1}),\dotsc,(a_{P,P},b_{P,P})\}.
\end{equation*}
Notice that these values define many possibilities for the source parameters.

If the number of sources is not known, but decompositions with $P \ge N$ sources are encoded in the sheaf, 
then the consistency radius of any assignment supported on the star open set (minimal open set in the sense of inclusion) whose stalk is $I^N/S_N$ is given by
\begin{equation}
\label{eq:consistency_radius_fixed_measurements_windowed}
\begin{aligned}
\sum_{i=N}^P \sum_{j=i+1}^P \min_{\sigma \in S_{j}}\left( \sum_{k=1}^{i} d_I\left((a_{i,k},b_{i,k}),(a_{j,\sigma(k)},b_{j,\sigma(k)})\right) + \sum_{m=i+1}^{j}\|a_{j,\sigma(m)}\| \right) +\\ \sum_{i=N}^P \|S(a_{i,1},b_{i,1} \dotsc, a_{i,i},b_{i,i}) - z\|.
\end{aligned}
\end{equation}
(Specifically, in Equation \ref{eq:consistency_radius_fixed_measurements_windowed} rather than starting the $i$ sums at $1$ they start at $N$.  
Additionally, all sums run to $P$ instead of $N$.)

Define
\begin{equation*}
a_{P,k} = \begin{cases}
a_k & \text{if } k \le N,\\
0 & \text{otherwise}
\end{cases}
\end{equation*}
and
\begin{equation*}
b_{P,k} = \begin{cases}
b_k & \text{if } k \le N,\\
\text{arbitrary} & \text{otherwise.}
\end{cases}
\end{equation*}

If we then declare that $a_{i,k} =a_{j,k}$ and $b_{i,k} =b_{j,k}$ whenever they are both defined, then this definition clearly results in the second sum of Equation \ref{eq:consistency_radius_fixed_measurements_windowed} being zero.  
On the other hand, the sum 
\begin{equation*}
\sum_{k=1}^{i} d_I\left((a_{i,k},b_{i,k}), (a_{j,\sigma(k)},b_{j,\sigma(k)})\right)
\end{equation*}
will also be made to vanish, simply by letting $\sigma = \id$.  
What of the remaining sum?  
It too vanishes because all of the terms in question have index greater than $N$, and therefore each vanishes by construction.
\end{proof}

\begin{corollary}
\label{cor:correct_sources}
If the $S$ function is injective, then the assignment for which the local consistency radius of both $\shf{J}_P$ and $\shf{K}_P$ vanishes occurs precisely at the number of sources $N$.
\end{corollary}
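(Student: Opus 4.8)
The plan is to combine Proposition \ref{prop:unknown_source_optimization} (which gives vanishing local consistency radius at the star open set whose stalk is $I^N/S_N$, when $P \ge N$) with the injectivity of $S$ to rule out all other open sets. Since Proposition \ref{prop:k_p_j_p} and its corollary already tell us that local sections of $\shf{K}_P$ and $\shf{J}_P$ coincide when $S$ is injective, it suffices to argue the statement for one of the two sheaves; I would work with $\shf{J}_P$, since its base topology is coarser and its local consistency radius splits as a sum $\sum_{i} \|S(w_{i,1},\dotsc,w_{i,i}) - z\|$ over the subproblems contained in a given open set, with no cross terms to manage.

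First I would recall that every open set in the base of $\shf{J}_P$ is a union of stars, and that the star over the element with stalk $I^k/S_k$ consists of just that element together with the observation vertex; its local consistency radius is exactly $\min_{w \in I^k/S_k} \|S(w) - z\|$. So the local consistency radius vanishes on the star over the $I^k/S_k$ element precisely when $z$ lies in the image of $S : I^k/S_k \to \mathbb{R}^M$. The task then reduces to showing that, given $z = S(a_1,b_1,\dotsc,a_N,b_N)$ and $S$ injective on $I^P/S_P$, the value $z$ is in the image of $S : I^k/S_k \to \mathbb{R}^M$ for $k \ge N$ but not for $k < N$.

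For $k \ge N$: padding the true parameter list with $k - N$ vertex elements $(0,b)$ gives a point of $I^k/S_k$ mapping to $z$ (since a vertex source contributes $0\cdot\phi(x;b) = 0$ to the sum), so $z$ is in the image — this is exactly the construction in Proposition \ref{prop:unknown_source_optimization}. For $k < N$: suppose $z = S(u_1,\dotsc,u_k)$ for some $u_i = (a_i',b_i') \in I$. Padding this list with $P - k$ vertex elements and the true list with $P - N$ vertex elements yields two points of $I^P/S_P$ both mapping to $z$; by injectivity of $S$ on $I^P/S_P$ they are equal as multisets. But the padded true list has exactly $N$ non-vertex entries (assuming, as is implicit in the setup, that the true decomposition is \emph{reduced}, i.e. all $a_n \ne 0$) whereas the padded short list has at most $k < N$ non-vertex entries — a contradiction. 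Hence $z$ is not in the image of $S$ on $I^k/S_k$ for $k < N$, so the local consistency radius there is strictly positive, and $N$ is the unique level at which it vanishes.

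The main obstacle is the bookkeeping around vertex (zero-magnitude) elements and the equivalence-class identifications in $I^N/S_N$: one must be careful that ``number of non-vertex entries'' is well-defined on multisets and invariant under the inclusion/padding maps of Definition \ref{def:aggregation_sheaf}, and that the hypothesis of a reduced true decomposition (no spurious zero sources) is either assumed or noted — otherwise $N$ is only determined up to padding and the corollary should be read as identifying the \emph{minimal} such $N$. Everything else is a direct consequence of injectivity plus the explicit form of the local consistency radius already derived in the text.
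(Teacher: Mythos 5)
Your argument is correct and fills in exactly what the paper leaves implicit: the paper states this corollary without proof, relying on Proposition \ref{prop:unknown_source_optimization} for the existence of a vanishing local consistency radius when $k \ge N$ and on injectivity of $S$ for the converse, and your padding-and-counting argument (two preimages in $I^P/S_P$ of the same $z$, with different numbers of non-vertex entries) is the right way to make the converse precise. The two caveats you flag are genuine and worth recording: the statement implicitly assumes the true decomposition is reduced (all $a_n \ne 0$), and ``precisely at $N$'' must be read as ``at the minimal such level,'' since the local consistency radius in fact vanishes on the star over $I^k/S_k$ for every $k$ with $N \le k \le P$.
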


Therefore, from a methodological perspective, the number of sources can be determined from the consistency filtration given enough samples.
If there is noise, look for the ``knee'' in local consistency radius as the open set size increases.  
That determines the open set that best describes the data.

As an example, consider the following conical space $I=\mathbb{R}\times\mathbb{R}$ under the metric
\begin{equation*}
d((a_1,b_1),(a_2,b_2)) := \min\{|a_1|+|a_2|, \sqrt{|a_1 - a_2|^2 + \beta|b_1 - b_2|^2}\}.
\end{equation*}
Let us treat $I$ as the representation of a single source, 
whose magnitude and position are each given by a scalar.
We consider two Examples, \ref{eg:two_sources_0} and \ref{eg:two_sources_1} involving the consistency radius of the two source problem.  
In Example \ref{eg:two_sources_0}, both magnitudes are known and have the same value, 
while in Example \ref{eg:two_sources_1} they can have different values.

In both Examples, the sheaf diagram of the problem has the form
\begin{equation*}
\xymatrix{
A_1=I \ar[r]^{i} \ar[dr]_{g} & A_2=I^2/S_2 \ar[d]^f\\
&B=\mathbb{R}^M\\
}
\end{equation*}
where $\mathbb{R}^M$ has the usual Euclidean metric.
This is an example of the sheaf $\shf{K}_2$ in what follows.
To keep the notation clear, we will name the assignment on $A_1$ to be $(a_{11},b_{11})$,
the assignment on $A_2$ to be $\left((a_{21},b_{21}),(a_{22},b_{22})\right)$, 
and the assignment on $B$ to be $z$.
The restriction map $f$ is given by the formula
\begin{equation*}
f\left((a_{21},b_{21}),(a_{22},b_{22})\right) = a_{21} \phi(b_{21}) + a_{22} \phi(b_{22}),
\end{equation*}
where $\phi : \mathbb{R} \to \mathbb{R}^M$ is a smooth function.
Since the top row of the sheaf diagram is an aggregation sheaf, we have that
\begin{equation*}
i((a_{11},b_{11})) = \left((a_{11},b_{11}),(0,c)\right),
\end{equation*}
where $c$ is arbitrary (but not required for well-definedness) since $(0,c)$ is the vertex of $I$. 
Commutativity of the sheaf diagram for $\shf{K}_2$ ensures that the restriction $g = f \circ i$ is given by
\begin{equation*}
g((a_{11},b_{11})) = a_{11} \phi(b_{11}).
\end{equation*}

\begin{example}
\label{eg:two_sources_0}
Let us assume that the magnitudes of both sources have the same value.
Without loss of generality, we may declare that $a_1 = a_2 = 1$.
The $\shf{K}_2$ sheaf model for this two source problem becomes
\begin{equation*}
\xymatrix{
A_1=\mathbb{R} \ar[r] \ar[dr] & A_2=\mathbb{R}^2 \ar[d]^f\\
&B=\mathbb{R}^M\\
}
\end{equation*}
Given the unit magnitudes,
we note that 
\begin{eqnarray*}
d\left(i(1,b_{11}), \left((1,b_{21}),(1,b_{22})\right) \right) &=&d\left(\left((1,b_{11}),(0,c)\right),\left((1,b_{21}),(1,b_{22})\right) \right)^2\\ 
&=& \beta|b_{11}-b_{21}|^2 + \min\{1+0, \sqrt{|0-1|^2 + \beta|c-b_{21}|}\}^2\\
&=& \beta|b_{11}-b_{21}|^2 + 1.
\end{eqnarray*}
This implies that the square of the consistency radius is
\begin{eqnarray*}
c^2(b_{11},b_{21},b_{22}) &=& \beta(b_{11}-b_{21})^2 + 1  + \left(f(b_{21},b_{22})-z\right)^2 + \left(g(b_{11}-z\right)^2\\
&=& \beta(b_{11}-b_{21})^2 + 1  + \left(\phi(b_{21}) + \phi(b_{22})-z\right)^2 + \left(\phi(b_{11})-z\right)^2.
\end{eqnarray*}
An expression of Corollary \ref{cor:correct_sources} is that the third term in the above,
\begin{equation*}
\left(\phi(b_{21}) + \phi(b_{22})-z\right)^2,
\end{equation*}
is the local consistency radius that can vanish if the correct source locations are used, since there are precisely two sources.  
Evidently the global consistency radius cannot vanish (ever)!
\end{example}

\begin{example}
\label{eg:two_sources_1}
Now let us consider the situation in which the source magnitudes are not known.  The sheaf diagram for $\shf{J}_2$ is given by
\begin{equation*}
\xymatrix{
A_1=\mathbb{R}^2 \ar[dr] & A_2=\mathbb{R}^2\times \mathbb{R}^2 \ar[d]^f\\
&B=\mathbb{R}^M\\
}
\end{equation*}
The square of the consistency radius for an assignment to $\shf{J}_2$ is given by
\begin{equation*}
c^2_{\shf{J}_2}(a_{11},b_{11},a_{21},b_{21},a_{22},b_{22}) =\|a_{21}\phi(b_{21}) + a_{22}\phi(b_{22})-z\|^2 + \|a_{11}\phi(b_{11}) - z\|^2.
\end{equation*}
If there are two sources, namely $z = a_{21}\phi(b_{21}) +  a_{22}\phi(b_{22})$, 
minimum consistency radius in $\shf{J}_2$ occurs precisely when the two terms of $c^2_{\shf{J}_2}$ are minimized.
Since this can be done independently, if $a_{22} \not=0$, then the second term will never vanish but the first will vanish.
On the other hand, if $a_{22} = 0$, then the global minimum consistency radius will be zero and will ensure that $(a_{11},b_{11}) = (a_{21},b_{21})$.

Let us now consider what happens with $\shf{K}_2$.
The sheaf diagram for $\shf{K}_2$ is
\begin{equation*}
\xymatrix{
A_1=\mathbb{R}^2 \ar[r] \ar[dr] & A_2=\mathbb{R}^2\times \mathbb{R}^2 \ar[d]^f\\
&B=\mathbb{R}^M\\
}
\end{equation*}
The square of the consistency radius of an assignment to $\shf{K}_2$ contains several more terms than $\shf{J}_2$, and is given by
\begin{eqnarray*}
c^2_{\shf{K}_2}(a_{11},b_{11},a_{21},b_{21},a_{22},b_{22}) &=& d_I\left((a_{11},b_{11}),(a_{21},b_{21})\right)^2 + \|(a_{22},b_{22})\|_I^2 \\&&+ \|a_{21}\phi(b_{21}) + a_{22}\phi(b_{22})-z\|^2 + \|a_{11}\phi(b_{11}) - z\|^2\\
&=& \min\left\{\left(|a_{11}|+|a_{21}|\right)^2,
|a_{11}-a_{21}|^2 + \beta |b_{11}-b_{21}|^2
\right\} + |a_{22}|^2 \\&&+ \|a_{21}\phi(b_{21}) + a_{22}\phi(b_{22})-z\|^2 + \|a_{11}\phi(b_{11}) - z\|^2.
\end{eqnarray*}

Let us assume that the single source problem is a subproblem of the two source problem.
That means that
\begin{equation*}
(a_{11},b_{11})=(a_{21},b_{21}).
\end{equation*}
Under the above assumption, the consistency radius is a function of $a_{21}$, $b_{21}$, $a_{22}$, and $b_{22}$ only,
\begin{equation*}
c^2_{\shf{K}_2}(a_{21},b_{21},a_{22},b_{22}) =
 |a_{22}|^2 + \|a_{21}\phi(b_{21}) + a_{22}\phi(b_{22})-z\|^2 + \|a_{21}\phi(b_{21}) - z\|^2.
\end{equation*}

Assume that the measured signal is from two sources, so that $z = a_{21}\phi(b_{21}) +  a_{22}\phi(b_{22})$. 
The square of the consistency radius becomes
\begin{eqnarray*}
c^2_{\shf{K}_2} &=&
 |a_{22}|^2 + \|a_{21}\phi(b_{21}) - z\|^2 \\
&=& |a_{22}|^2 + \|-a_{22}\phi(b_{22})\|^2 \\
&=& |a_{22}|^2 \left(1 + \|\phi(b_{22})\|^2\right).
\end{eqnarray*}
This vanishes if and only if the measurements are from a single source, since in that case $a_{22}=0$.

On the other hand, where are the critical points?
We begin by a preliminary calculation:
\begin{eqnarray*}
\frac{\partial}{\partial a_{11}} \|a_{11}\phi(b_{11}) - z\|^2 &=& \frac{\partial}{\partial a_{11}} \left(\left(a_{11}\phi(b_{11}) - z\right)\bullet\left(a_{11}\phi(b_{11}) - z\right)\right)\\
&=& 2 \phi(b_{11}) \bullet \left(a_{11}\phi(b_{11}) - z\right).
\end{eqnarray*}
That established, the gradient of the square of the consistency radius is given by
\begin{equation*}
d(c^2_{\shf{K}_2}) = 2 \begin{pmatrix}
\phi(b_{21})\bullet \left(2 a_{21}\phi(b_{21}) + a_{22}\phi(b_{22})-2z\right)\\
\left(d\phi(b_{21})\right)\left(2 a_{21} \phi(b_{21}) + a_{22} \phi(b_{22}) - 2 z\right)\\
a_{22} + \phi(b_{22}) \bullet \left(a_{21}\phi(b_{21}) + a_{22}\phi(b_{22})-z\right)\\
\left(d\phi(b_{22})\right) \left(a_{21}\phi(b_{21}) + a_{22}\phi(b_{22})-z\right)\\
\end{pmatrix}
\end{equation*}

Continuing to assume there are two sources, so that $z = a_{21}\phi(b_{21}) +  a_{22}\phi(b_{22})$,
the gradient of the square of the consistency radius simplifies to
\begin{equation*}
d(c^2_{\shf{K}_2}) = 2 \begin{pmatrix}
\phi(b_{21})\bullet \left(a_{21}\phi(b_{21}) -z\right)\\
\left(d\phi(b_{21})\right)\left(a_{21} \phi(b_{21}) - z\right)\\
a_{22} \\
0\\
\end{pmatrix}.
\end{equation*}
If there is only one source, then each of the coefficients of the gradient do indeed vanish since $a_{22}=0$,
implying that the minimum of the consistency radius is the correct single source decomposition.
On the other hand, if there are two sources, then the gradient cannot vanish in virtue of the third component.
This implies that minimizing consistency radius will only recover approximations to the correct source parameters.
These approximations improve in the limit when $|a_{22}| \ll |a_{21}|$, 
which is effectively a condition required by the greedy algorithm posited by minimizing consistency radius in $\shf{K}_P$.
\end{example}


\section{Application to the two example problems}
\label{sec-application}

This section outlines a few examples of the sheaf constructions described in this article.
It also records the results from several of them,
along with a description of a general framework for sheaf-based calculations.

\subsection{Fourier decomposition}

If the samples are equally spaced, then the signal model is given by
\begin{equation*}
S(a_{-N},-N,a_{-N+1},-N+1, \dotsc, a_{N},N) = \left(\sum_{n=-N}^N a_n, \sum_{n=-N}^N a_n e^{2\pi i n h}, \sum_{n=-N}^N a_n e^{4\pi i n h}, \dotsc, \sum_{n=-N}^N a_n e^{2\pi i n h (M-1)}\right)
\end{equation*}
for some fixed $h \in \mathbb{R}$.

We can express this diagrammatically as 
\begin{equation*}
\xymatrix{
\mathbb{C}^{2N+1} \ar[d]^S \\
\mathbb{C}^M
}
\end{equation*}
noting that minimizing consistency radius is tantamount to inverting the map $S$.
If the number of sources $2N+1$ is known, then $S$ is simply a discrete Fourier transform.
Provided $M=2N+1$, the inverse discrete Fourier transform will recover the source parameters $a_n$ without any further trouble.

\subsection{Spectral estimation}
\label{sec-spectral-solution}

Let us consider the situation described in Section \ref{sec-spectral_estimation},
where the signal model is a sum of complex sinusoids
\begin{equation*}
s(x) = \sum_{n=1}^N a_n e^{2 \pi i \omega_n x},
\end{equation*}
with complex magnitudes $\{a_n\}$ and real frequencies $\{\omega_n\}$ being the unknown parameters.
For each source, the unknown parameters lie in $(\mathbb{C} \times \mathbb{R})$, 
which can be given a conical metric.
If we make $M$ measurements, so that $x \in \mathbb{R}^M$, the $\shf{J}_P$ sheaf therefore has the form
\begin{equation*}
\xymatrix{
(\mathbb{C} \times \mathbb{R}) \ar[drr] & (\mathbb{C} \times \mathbb{R})^2 \ar[dr] & \dotsb & (\mathbb{C} \times \mathbb{R})^P \ar[dl]\\
&& \mathbb{C}^M\\
}
\end{equation*}
According to Proposition \ref{prop:unknown_source_optimization}, this will have zero local consistency radius on sufficiently small open sets provided that $P\ge N$ and the correct source parameters are found.
Therefore, minimization of local consistency radius over all open sets in the Alexandrov topology of the base space of the $\shf{J}_P$ sheaf is guaranteed to recover all source parameters.

The number of open sets in the Alexandrov topology of the base space of $\shf{J}_P$ is $2^P+2$.  Since this count also contains the empty set and the measurement space $\mathbb{C}^M$, this means that at most $2^P$ optimizations need to be performed.  
However, most of the optimization problems are not relevant to the problem at hand -- namely determining the soure parameters.  
In actual fact, only $P$ optimization problems are needed -- one problem for each proposed number of sources.
These problems are to be solved independently, since the sheaf structure does not include restriction maps between the elements in the top row of the diagram.

However, a greedy approach is suggested by the $\shf{K}_P$ sheaf, whose diagram is given by
\begin{equation*}
\xymatrix{
(\mathbb{C} \times \mathbb{R}) \ar[r] \ar[drr] & (\mathbb{C} \times \mathbb{R})^2 \ar[dr] \ar[r] & \dotsb \ar[r] & (\mathbb{C} \times \mathbb{R})^P \ar[dl]\\
&& \mathbb{C}^M\\
}
\end{equation*}
Given the fact that the top row of the $\shf{K}_P$ sheaf is an aggregation sheaf, Proposition \ref{prop:sorting} suggests -- 
but does not establish -- 
that solutions to the minimization of local consistency radius with a small number of sources and minimization of local consistency radius with a larger number of sources ought to be related.
One can therefore proceed somewhat greedily, 
by using the solution for a given number of sources as an initial guess for the solution with either one more or one fewer sources.
From a practical standpoint, both incrementally increasing and incrementally decreasing the number of sources are viable approaches.

In addition to these two sheaves, if measurements are gathered while the optimization is to be performed, 
one may use a different structure that works a little differently.  
Specifically, if $P$ sources are proposed, then since $(\mathbb{C} \times \mathbb{R})^P$ is determined by $3P$ real parameters, 
then the Whitney embedding theorem implies that $3P+1$ complex measurements are needed (which is one real parameter more than necessary).
However, if more sources are proposed, then the measurements already present can simply be reused.
This results in a sheaf of the form
\begin{equation*}
\xymatrix{
(\mathbb{C} \times \mathbb{R})  \ar[d] & (\mathbb{C} \times \mathbb{R})^2 \ar[d]  & \dotsb & (\mathbb{C} \times \mathbb{R})^P \ar[d]\\
\mathbb{C}^4 & \mathbb{C}^7 \ar[l] & \dotsb \ar[l] & \mathbb{C}^{3P+1} \ar[l]\\
}
\end{equation*}
in which the arrows on the bottom row represent projection maps.
It is an immediate consequence of Proposition \ref{prop:unknown_source_optimization} that minimizing local consistency radius in this alternative sheaf will still recover the correct number of sources and their associated source parameters.


\section{Discussion and conclusions}
\label{sec-conclusion}

The topological and sheaf-based approach posited in this article exhibits significant advantages over current approaches to solving modal decomposition problems.
Foremost is that our approach is essentially \emph{the most general possible} approach to solving these problems,
and yet we are able to prove solvability conditions and prescribe algorithmic strategies for constructing solutions.
One cannot overemphasize this fact: regardless of what physical processes or techniques are used for gathering measurements about sources -- be they classical, quantum, or otherwise -- they are subject to the same mathematical formalism as we have discovered and described in this article.
Given the generality of our approach, it is likely that (a) our methods will apply in many unexpected settings, (b) our methods may exactly recover existing techniques in certain settings, but (c) anecdotally (see \cite{robinson2018dynamic} for instance) our more general methods may substantially outperform existing modal decompositions when existing techniques are hampered by limiting assumptions about the signals being used.

We call out three foci where our approach may have a significant impact.
First, our approach can be applied to many other modal decomposition problems without essential change.
Indeed, what needs to be defined are the spaces of source parameters (as conical spaces), the signal map $S$, and the space of possible measurements.

Secondly, because of the generality of the sheaf-based approach to modal decomposition, any other modal decomposition approach can be recast as a sheaf.
It is a mathematical fact that the data within any well-posed data fusion problem can be recast as a sheaf \cite{robinson2017sheaves}.
Recovering the source parameters from measurements is clearly a kind of data fusion problem, although it is one in which the data are of homogeneous type.
Therefore, the sheaf recasting result simply applies to modal decomposition as a special case.  
Although not stated in this way directly, our approach described in Section \ref{sec-approach} is simply a manifestation of the general result in the special case of modal decompositions.

Finally, in recasting a data fusion problem as a sheaf, 
the original algorithmic framework is nearly always transformed into an optimization problem involving consistency radius.  
Even when this does not occur, optimization of consistency radius tends to yield insight into the original approach, and sometimes outperforms the original approach when the original approach's assumptions are violated.
Additionally, anecdotally, the mere task of recasting data fusion problems as sheaves requires up-front engineering of requirements and assumptions.
While this is ``good model hygiene,'' the careful crafting of assumptions is time consuming and is often omitted.  Sheaf-based methods prohibit this, and yet allow one to exploit the performance deficiencies that are created when assumptions are violated.

Since our starting point in Section \ref{sec-problem_statement} was that of the most general modal decomposition, 
our theoretical approach is in the privileged position of acting as an unbiased arbitrator amongst algorithms.  
Indeed, since any other approach to solving a modal decomposition problem can be recast as a sheaf model,
this gives us the ability to compare several such sheaf models in an even-handed way.
Using our sheaf-based framework as a baseline since other approaches may be recast as sheaves, apparently distinct approaches are able to be compared in an unbiased manner.

\section*{Acknowledgements}
The views, opinions and/or findings expressed are those of the author and should not be interpreted as representing the official views or policies of the Department of Defense or the U.S. Government.
This material is based upon work supported by the
Defense Advanced Research Projects Agency (DARPA) under Agreement No. HR00112090125.


\bibliographystyle{plain}
\bibliography{aggsheaf-master}

\end{document}